\documentclass[11pt]{article}


\usepackage{latexsym,amsmath,amsthm,amssymb,amscd}


\newfont{\bb}{msbm10 at 11pt}


\def\r{\hbox{\bb R}}

\def\c{\hbox{\bb C}}
\def\s{\hbox{\bb S}}

\def\amb{\mathcal{N}}

\def\campo{\mathfrak{X}}


\newcommand{\derv}[2]{\dfrac{d #1}{d #2}}
\newcommand{\parc}[2]{\dfrac{\partial #1}{\partial #2}}

\newcommand{\abs}[1]{\left\vert #1 \right\vert}
\newcommand{\set}[1]{\left\{#1\right\}}
\newcommand{\metag}[2]{g\left( #1 , #2 \right) }
\newcommand{\metal}[2]{\left( #1 , #2 \right) }
\newcommand{\meta}[2]{\langle #1 , #2 \rangle }
\newcommand{\eps}{\varepsilon}

\newcommand{\camb}{\overline{\nabla}}
\newcommand{\ricd}{{\rm Ric _\phi ^\infty }}
\newcommand{\scad}{ R _\phi ^\infty }
\newcommand{\derb}[3]{\left.\frac{d #1}{d #2}\right\vert _{#3} }


\topmargin 0cm \textheight = 40\baselineskip \textwidth 16cm \oddsidemargin 0.3cm
\evensidemargin 0.4cm


\numberwithin{equation}{section}

\begin{document}


\theoremstyle{plain}\newtheorem{lemma}{Lemma}[section]
\theoremstyle{plain}\newtheorem{definition}{Definition}[section]
\theoremstyle{plain}\newtheorem{theorem}{Theorem}[section]
\theoremstyle{plain}\newtheorem{proposition}{Proposition}[section]
\theoremstyle{plain}\newtheorem{remark}{Remark}[section]
\theoremstyle{plain}\newtheorem{corollary}{Corollary}[section]

\begin{center}
\rule{15cm}{1.5pt} \vspace{.6cm}

{\Large \bf Gradient Schr\"{o}dinger Operators, \\[3mm] Manifolds with Density and applications} \vspace{0.4cm}

\vspace{0.5cm}

{\large Jos$\acute{\text{e}}$ M. Espinar}\\
\vspace{0.3cm} 

\noindent Instituto de Matem\'{a}tica Pura e Aplicada\\ e-mail: jespinar@impa.br\vspace{0.2cm}

\rule{15cm}{1.5pt}
\end{center}

\vspace{.5cm}

\begin{abstract}

The aim of this paper is twofold. On the one hand, the study of gradient Schr\"{o}dinger operators on manifolds with density $\phi$. We classify the space of solutions when the underlying manifold is $\phi-$parabolic. As an application, we extend the Naber-Yau Liouville Theorem, and we will prove that a complete manifold with density is $\phi -$parabolic if, and only if, it has finite $\phi-$capacity. Moreover, we show that the linear space given by the kernel of a nonnegative gradient Schr\"{o}dinger operators is one dimensional provided there exists a bounded function on it and the underlying manifold is $\phi -$parabolic.

On the other hand, the topological and geometric classification of complete weighted $H_\phi -$stable hypersurfaces immersed in a manifold with density $(\amb , g, \phi)$ satisfying a lower bound on its Bakry-\'{E}mery-Ricci tensor. Also, we classify weighted stable surfaces in a three-manifold with density  whose Perelman scalar curvature, in short, P-scalar curvature, satisfies $\scad + \frac{\abs{\nabla \phi}^2 }{4} \geq 0$. Here, the P-scalar curvature is defined as $\scad = R - 2 \Delta _g \phi - \abs{\nabla _g  \phi  }^2$, being $R$ the scalar curvature of $(\amb ,g)$.

Finally, we discuss the relationship of manifolds with density, Mean Curvature Flow (MCF), Ricci Flow and Optimal Transportation Theory. We obtain classification results for stable self-similiar solutions to the MCF, and also for stable translating solitons to the MCF (as far as we know, the first classification result). Moreover, for gradient Ricci solitons, we recover the Hamilton-Ivey-Perelman classification assuming only a $L^2-$type bound on the scalar curvature and a inequality between the scalar curvature and the Ricci tensor. We also classify gradient Ricci solitons when the scalar curvature does not change sign. We finish by classifying critical transportation plans for the Boltzman entropy on $\phi -$parabolic manifolds. In particular, we recover the case for the Gaussian measure. 
\end{abstract}

%

\vspace{.2cm}

{\bf Keywords:} Manifolds with density, Generalized mean curvature, Stability, Schr\"{o}dinger Operators, Mean Curvature Flow, Ricci Flow, Optimal transport. 

\vspace{.2cm}


\section{Introduction}

There exists a natural measure in a Riemannian manifold $(\amb ,g)$, the Riemannian volume measure $dv_g \equiv dv$. More generally, we can consider smooth metric measure spaces, that is, triples $(\amb , g , m)$ where $m$ is a smooth measure on $\amb$. Equivalently, by the Radon-Nikod\'{y}m Theorem, we can consider triples $(\amb , g , \phi)$, where $\phi \in C^\infty (\amb)$ is a smooth function so that $dm = e^\phi dv$. The triple $(\amb , g , \phi)$ is called a {\bf manifold with density $\phi$}. 

One of the first examples of a manifold with density appeared in the realm of probablity and statistics; the Euclidean Space with the Gaussian density $dm := e^{-\pi |x|^2} \, dx$ (see \cite{ACanVBayFMorCRos08} for a detailed exposition in the context of isoperimetric problems).  In 1985, D. Bakry and M. \'{E}mery \cite{DBakMEme85}  studied manifolds with density in the context of diffusion equations and they introduced the so-called {\bf Bakry-\'{E}mery-Ricci tensor} given by
\begin{equation}\label{Eq:ricd}
\ricd = {\rm Ric} - \camb ^2  \phi , 
\end{equation}where ${\rm Ric}$ and  $\camb ^2$ are the Ricci tensor and  the Hessian with respect to the ambient metric $g$, respectively. Nowadays, manifolds with density appear in many other fields of mathematics.

M. Gromov \cite{MGro03} considered manifolds with density as {\it mm-spaces} and introduced the generalized mean curvature of an oriented hypersurface $\Sigma \subset (\amb ,g , \phi)$, or {\bf weighted mean curvature}, as a natural generalization of the mean curvature. The weighted mean curvature is defined by
\begin{equation*}
H_\phi = H + \metag{N}{\camb  \phi}, 
\end{equation*}where $H$ denotes the usual mean curvature of $\Sigma$, $N$ is the unit normal vector field along $\Sigma$ and $\camb$ is the Gradient operators with respect to the ambient metric $g$. The important point is that hypersurfaces of constant weighted curvature appear as critical points of certain weighted volume functional. This fundamental analogy with constant mean curvature hypersurfaces took the attention on the field (cf. \cite{EBarBSharYWei,VBay03,ACanVBayFMorCRos08,XCheTMejDZho12,GHui90,FMor05} and reference therein).

It is interesting to recall here that  self-similiar solutions to the mean curvature flow in $\r ^{n+1}$ can be seen as weighted minimal hypersurfaces in the Euclidean space endowed with the corresponding density. G. Huisken \cite{GHui90} and T. Colding and W. Minicozzi \cite{TColWMin12a,TColWMin12b} proved this relationship for self-shrinker (and also self-expander). Moreover, T. Illmanen \cite{TIll} showed that translating solitons of the Mean Curvature Flow can be seen as weighted minimal hypersurfaces (see also \cite{XCheTMejDZho12,XCheDZho12,JCluOSchFSch,HLee,GSmi,LWan12} and references therein). 

Another field in Mathematics where manifolds with density appear is in G. Perelman's work \cite{GPer,GPer2} on the Poincar\'{e} conjecture. He was able to formulate the Ricci flow as a gradient flow (cf. \cite{PTop06} for details) for certain {\it Fischer Information functional} $\mathcal F$, and the more general $\mathcal W -$entropy. The aim of Perelman for introducing such functionals was the classification of (gradient) Ricci solitons, that is, self-similar solutions to the Ricci flow. They can be analytically described as manifolds with density $(\amb , g , \phi)$ such that $\ricd = \lambda g$, with $\lambda \in \r$. They are called {\bf shrinking}, {\bf steady} or {\bf expanding} depending if $\lambda > 0$, $\lambda =0$ or $\lambda <0$ respectively. Steady Ricci solitons appear as critical points of the $\mathcal F$ functional (shrinking Ricci solitons appear as critical points of the $\mathcal W-$entropy functional). This is an active topic in Differential Geometry today (cf. \cite{HCao09,TIve93,ANab10,PPetWWyl10,PPetWWyl12}).

In \cite{GPer,GPer2}, Perelman introduced a geometric invariant associatted to the manifold with density that will be important in this paper, the {\bf Perelman Scalar Curvature} $\scad$, in short P-scalar curvature, given by 
\begin{equation}\label{Eq:Scad}
\scad = R - 2 \Delta _g  \phi - \abs{\camb \phi}^2 ,
\end{equation}where $\Delta _g$ is the Laplacian operator with respect to the ambient metric $g$. Perelman proved that $\scad$ is not the trace of $\ricd$ but they are related by a Bianchi type identity. 

Recently, geometric problems on manifolds with density $(\amb , g , \phi) $ has been related to corresponding problems in the Wasserstein space of probability measures equipped with the quadratic Wasserstein metric, which corresponds to a relaxed version of Monge\rq{}s optimal transportation problem (cf. \cite{FOtt01,CVil08} and references therein). 

\section{Organization of the paper and Main results}

We will describe here the organization of the paper and we will outline the main results. 

In Section \ref{Sect:FiniteType} we develop the theory of gradient Schr\"{o}dinger operators acting on piecewise smooth functions of compact support $u \in C^\infty _0 (\Sigma)$ defined on a manifold $\Sigma$. That is, given a Riemannian manifold $(\Sigma ,g )$, complete or compact with boundary possibly empty, $\phi \in C^\infty (\Sigma)$ and $q \in C^\infty (\Sigma) \cap L ^\infty (\Sigma)$ called {\bf potential}, we study differential operators on the form
\begin{equation}\label{Eq:Lphi}
 L_\phi  u := \Delta u + g(\nabla \phi , \nabla u) + q u \, , \, \, u \in C^\infty _0 (\Sigma),
\end{equation}in particular, the $\phi-$Laplacian is defined by $\Delta _\phi u = \Delta u + \metag{\nabla \phi}{ \nabla u}$ occurs when $q \equiv 0$. 

Recall that a manifold with density $(\Sigma , g , \phi) $ is $\phi -$parabolic if there is no nonconstant nonnegative $\phi -$superharmonic function $u$, equivalently, if $\Delta _ \phi u = \Delta u + \metag{\nabla \phi}{\nabla u} \leq 0$ and $u \geq 0$ then $u$ is constant. For any compact set $K \subset \Sigma$, we define the $\phi -$capacity of $K$ as 
$$ {\rm cap}^\phi (K) = {\rm inf}\set{ \int _\Sigma |\nabla u|^2  e^{\phi} dv_\Sigma \, : \, u \in Lip _0 (\Sigma) \text{ and } u _{|K} \equiv 1 } ,$$where $Lip _0 (\Sigma)$ is the set of compactly supported Lipschitz functions on $\Sigma$. 

We will use a notion weaker than the above $\phi -$parabolicity. Namely, we say that $(\Sigma , g , \phi)$ is of {\bf finite $\phi -$capacity} if there exists an exhaustion by relatively compact sets  $ \set{K_i} \subset \Sigma$ with uniformly bounded $\phi -$capacity (see Definition \ref{Def:FiniteType}). Note that Definition \ref{Def:FiniteType} is more general than the notion of $\phi -$parabolicity. Moreover, if $\Sigma$ has dimension $2$, $\Sigma $ is parabolic if, and only if, it is $\phi-$parabolic for any $\phi \in C^\infty (\Sigma)$ by the conformal invariance of the Dirichlet integral. Nevertheless, this is not longer true if ${\rm dim}(\Sigma) \geq 3$.

The following result, which extends \cite[Theorem 2.3]{MManJPerMRod12}, tells us that  gradient Schr\"{o}dinger operators over manifold with finite $\phi -$capacity behaves as in the compact case. 

{\bf Theorem A: } {\it Let $(\Sigma ,g , \phi)$ be a complete manifold with finite $\phi-$capacity and take $q_1,q_2 \in C^\infty (\Sigma)$ such that $q_1 - q_2 \geq 0$ on $\Sigma$. Assume that: 
\begin{enumerate}
\item There exists a positive subsolution $u$ to the gradient Schr\"{o}dinger operator $L_{\phi,q_1} := \Delta + g (\nabla \phi , \nabla \cdot ) +q_1$, i.e., $u$ satisfies
$$ L_{\phi,q_1} u \leq 0 .$$
\item There exists a bounded function $ v \in C^\infty (\Sigma) $ so that 
 $$ L_{\phi,q_2} v := \Delta v + g (\nabla \phi , \nabla v ) +q_2 v \geq 0 .$$
\end{enumerate}

Then, $v/u$ is constant.} 

Moreover, the above technique allows us to extend the Naber-Yau Liouville Theorem \cite{ANab10}, which implies that finite $\phi-$capacity is, in fact, equivalent to $\phi -$parabolicity (see Corollary \ref{Cor:FiniteParabolic}). Specifically,

\begin{quote}
{\bf Theorem \ref{Th:NaberYau}.} {\it  Let $(\Sigma ,g, \phi)$ be a complete manifold with finite $\phi-$capacity. Take $q \in C^\infty (\Sigma)$ and consider the gradient Schr\"{o}dinger operator $L_{\phi,q} := \Delta + g (\nabla \phi , \nabla \cdot ) +q$. 

\begin{enumerate}
 \item Assume $q \geq 0$. Then any nonnegative subsolution $u$ to $L_{\phi, q}$, i.e., $u$ satisfies $ L_{\phi,q} u \leq 0 $, must be constant and either $u\equiv 0$ or $q \equiv 0$ on $\Sigma$.  In particular, any solution bounded above or below to $L _{\phi,q} u= 0$ must be constant.

\item  Assume $q \leq 0$ and $L_{\phi,q}$ is stable. If there is a bounded supersolution $u$, i.e., $u$ satisfies $ L_{\phi,q} u \geq 0 $, then $u$ is constant and either $u \equiv 0$ or $q\equiv 0$ on $\Sigma$.
\end{enumerate}}
\end{quote}

Here, $L_{\phi,q}$ is stable in the sense of Schr\"{o}dinger operators (see Definition \ref{Def:LStableGeneral}), i.e., its first eigenvalue is nonnegative. Hence, we obtain the following characterization of the kernel of a gradient Schr\"{o}dinger operator on $\phi -$parabolic manifolds.

\begin{quote}
{\bf Theorem \ref{Th:Kernel}.} {\it  Let $(\Sigma ,g, \phi)$ be a complete $\phi-$parabolic manifold. Take $q \in C^\infty (\Sigma)$ and assume that the gradient Schr\"{o}dinger operator $L_{\phi,q} := \Delta + g (\nabla \phi , \nabla \cdot ) +q$ is stable. If $u$ is a bounded smooth function in the kernel of $L_{\phi, q}$, then $u$ is either identically zero or it has no zeroes on $\Sigma$. Hence, the linear space of such functions is one dimensional.}
\end{quote}


In Section \ref{Sect:Schrodinger} we classify $H_\phi -$stable hypersurfaces in a manifold with density satisfying a lower bound on the Bakry-\'{E}mery-Ricci tensor applying Theorem A. 

\begin{quote}
{\bf Theorem \ref{Th:Hypersurface}.} {\it Let $\Sigma \subset (\amb , g , \phi)$ be a  $\phi-$parabolic complete $H_\phi -$stable hypersurface and assume that $\ricd \geq k$, $k\geq 0$. Then, $\Sigma$ is totally geodesic and $\ricd (N,N)\equiv 0$ along $\Sigma$. }
\end{quote}

In particular, Theorem \ref{Th:Hypersurface} extends results obtained by X. Cheng, T. Mejia and D. Zhou \cite{XCheTMejDZho12} and G. Liu \cite{GLiu}. The above result is not necessary true if we remove the parabolicity condition, which states the nontriviallity of the above result. Our Theorem \ref{Th:Hypersurface} is the extension to manifolds with density and we must remark that our result does not follows from Manzano-P\'{e}rez-Rodr\'{i}guez's Theorem \cite{MManJPerMRod12}.

We continue this Section \ref{Sect:Schrodinger} by focussing on surfaces. We introduce the key tool for studying $H_\phi -$stable surfaces, that is, we will associate a self-adjoint Schr\"{o}dinger operator to any weighted $H_\phi -$stable surface which is stable in the sense of Schr\"{o}dinger operators. We prove

\begin{quote}
{\bf Main Lemma}. {\it  Let $\Sigma \subset (\amb ^3, g , \phi)$ be a complete weighted $H_\phi -$stable surface. Then, 
\begin{equation*}
\int_ \Sigma( V - a K) f^2dv_\Sigma \leq  \int _\Sigma  |\nabla f|^2  dv_\Sigma 
\end{equation*} for any $f \in C_0 ^\infty (\Sigma)$ compactly supported piecewise smooth function, where 
\begin{equation*}
V:=\frac{1}{3}\left( \frac{1}{2}\scad  +\frac{1}{2}H_\phi ^2+ \frac{1}{2}|A|^2 +\frac{1}{8}|\nabla \phi|^2\right) \text{ and  } a:= \frac{1}{3}.
\end{equation*}

In other words, the Schr\"{o}dinger operator 
$$ L := \Delta - a K + V $$is stable in the sense of Definition \ref{Def:StableL}.}
\end{quote}

From the Main Lemma we continue with the classification of complete weighted $H_\phi -$stable surfaces in a manifold with density $\phi \in C^\infty (\Sigma )$ so that its P-scalar curvature satisfies $\scad + \frac{\abs{\nabla \phi}^2 }{4} \geq c \geq 0$. First, we extend the Rosenberg\rq{}s diameter estimate \cite{HRos06} to weighted $H_\phi-$stable surfaces given a positive lower bound for $\scad + \frac{\abs{\nabla \phi}^2 }{4} + H^2_\phi$.

\begin{quote}
{\bf Theorem \ref{Th:Diameter}.} {\it  Let $\Sigma \subset (\amb , g , \phi )$ be a weighted $H_\phi -$stable surface with boundary $\partial \Sigma$. If $\scad + \frac{\abs{\nabla \phi}^2 }{4} + H^2_\phi \geq c >0$ on $\Sigma$, then 
$$ d_\Sigma \left( p , \partial \Sigma \right) \leq \frac{2\pi }{\sqrt{3c}} \text{ for all } p \in \Sigma ,$$
where $d_\Sigma$ denotes the intrinsic distance in $\Sigma$. Moreover, if $\Sigma$ is complete without boundary, then it must be topologically a sphere.}
\end{quote}

Theorem \ref{Th:Diameter} says that the only complete noncompact weighted $H_\phi -$stable we shall consider are the weighted minimal ones. So, the next step is to classify the complete noncompact weighted minimal stable surfaces in the spirit of the works of D. Fischer-Colbrie and R. Schoen \cite{DFisRSch80} and R. Schoen and S.T. Yau \cite{RSchSYau82}. The theorem we provide here generalizes a previous result of P. T. Ho \cite{PHo10} where he extended Fischer-Colbrie-Schoen result under the additional hypothesis that $|\nabla \phi| $ is bounded and $\scad \geq 0$ on $\Sigma$. Here, we drop the first condition and relax the hypothesis on the P-scalar curvature. Specifically,

\begin{quote}
{\bf Theorem \ref{Th:FCS}.} {\it  Let $\Sigma \subset (\amb , g , \phi )$ be a complete (noncompact) weighted stable minimal surface where $\scad + \frac{\abs{\nabla \phi}^2 }{4}\geq 0$. Then, $\Sigma$ is conformally equivalent either to the complex plane $\c$ or to the cylinder $\s ^2 \times \r$. In the latter case, $\Sigma$ is totally geodesic and flat.}
\end{quote}

\begin{remark}
We must point out that the above results are nontrivial and do not follow from a conformal change of the ambient metric. 
\end{remark}

In Section \ref{Sect:Applications}, we apply the above results to various situations. T. Colding and W. Minicozzi \cite{TColWMin12a,TColWMin12b} proved that there are no $L-$stable self-shrinker $\Sigma$ with polynomial area growth. We say that a self-shrinker (resp. expander)  $\Sigma$ is {\bf $L-$stable} if it is stable as a weighted minimal hypersurface in $(\r ^{n+1} , g_0 , \phi _{-1})$ (resp. $(\r ^{n+1} , g_0 , \phi _{+1})$). Here, $\phi _c = c\frac{|x|^2}{4}$, where $x$ is the position vector in $\r ^{n+1}$, and $c =-1$ if $\Sigma$ is a self-shrinker and $c=+1$ if $\Sigma$ is a self-expander. 

Polynomial area growth implies that 
$$ \int _{B(i+1) \setminus B(i)} e^{\phi _{-1}} \, dv \to 0 ,$$where $B(r)$ denotes the geodesic ball of radius $r$ centered at a fixed point. In particular, $\Sigma$ is  weighted parabolic  (see Definition \ref{Def:SelfFinite}). We say that a complete self-shrinker (resp. expander) is {\bf weighted parabolic} if $(\Sigma , g , \phi _{-1})$ is $\phi _{-1}-$parabolic (resp. $(\Sigma , g ,\phi _{+1})$ is $\phi_{+1}-$parabolic). 

So, applying Theorem A, we can prove:

\begin{quote}
{\bf Theorem \ref{Th:FiniteMCF}.} {\it There are no complete weighted parabolic $L-$stable self-shrinkers in $\r ^{n+1}$.}
\end{quote}

As far as we know, there are no general classification results for $L-$stable self-expanders. We can, at least, give some condition for the nonexistence (see Proposition \ref{Prop:SelfExpander}).

In fact, there is another notion of stability more related to graphs introduced in \cite{TColWMin12a,TColWMin12b}. A self-similar solution to the mean curvature flow is $L_0-$stable when $L_0 u := \Delta  u + \metag{\nabla \phi _c}{ \nabla u} + |A|^2 u =0 .$ is stable in the sense of operators (see Definition \ref{Def:LStableGeneral}). Thus, this leads us to:

\begin{quote}
{\bf Theorem \ref{Th:GraphFlow}.} {\it  The only complete weighted parabolic self-similar solutions (shrinker or expander) to the mean curvature flow that are $L_0-$stable are hyperplanes.

In particular, The only self-similar solutions (shrinker or expander) to the mean curvature flow that are complete multi-graphs and weighted parabolic are hyperplanes. Moreover, the only self-shrinkers that are entire graphs are the hyperplanes.}
\end{quote}

Recently, L. Wang \cite{LWan12} proved that the only self-shrinkers that are entire graphs are hyperplanes. This result follows from ours since a proper self-shrinker has polynomial volume growth \cite[Theorem 1.3]{XCheDZho12}, and hence, it is weighted parabolic. 

Analogously, one can study stable translating solitons. Translating solitons can be seen as weighted minimal hypersurfaces in $(\r ^{n+1} ,\meta{}{} , \phi _T)$, where $\phi _T (x) = \meta{x}{v}$ for a fixed direction $v \in \s ^n$ (cf. \cite{TIll}). So, it makes sense to classify the stable ones as for self-similar solutions. We obtain

\begin{quote}
{\bf Theorem \ref{Th:Translating}:}  {\it There are no complete weighted parabolic stable translating soliton.} 
\end{quote}

As far as we know, this is the only classification result we know for stable translating solitons. Recently, F. Mart\'{i}n, A. Savas-Halilaj and K. Smoczyk \cite{FMar} obtained classification results and topological obstructions for the existence of translating solitons of the mean curvature flow in euclidean space.

We also use Theorem \ref{Th:NaberYau} for classifying gradient Ricci solitons. Noncompact gradient Ricci solitons are fundamaental in the proof of the Poincar\'{e} Conjecture. From the works of R. Hamilton \cite{RHam82}, T. Ivey \cite{TIve93} and G. Perelman \cite{GPer,GPer2} we have: {\it The only three dimensional shrinking gradient Ricci solitons with bounded curvature are the finite quotients of $\r ^3$, $\s ^2 \times \r$ or $\s^3$.} 

P. Petersen and W. Wylie \cite{PPetWWyl10} showed that the only complete shrinking gradient Ricci soliton such that $\int _\Sigma |{\rm Ric}|^2 e^\phi \, dv_\Sigma < \infty $ and vanishing Weyl Tensor, i.e. $W \equiv 0$, are finite quotients of $\r ^n$, $\s ^{n-1} \times \r$ or $\s ^n$ (see \cite{PPetWWyl10,PPetWWyl12} and references therein for a detailed exposition).

We extend the above result imposing only a $L^2-$type bound on the scalar curvature $R$. Moreover, they assume that the Weyl tensor vanishes identically, we change that condition by an inequality between the scalar curvature and the Ricci curvature (see Theorem \ref{Th:Shrinker}).

So we recover the Hamilton-Ivey-Perelman classification assuming only a $L^2-$type bound on the scalar curvature and a inequality between the scalar curvature and the Ricci tensor. We also give the following classification result when the scalar curvature does not change sign:

\begin{quote}
{\bf Theorem \ref{Th:GradientSoliton}.}
{\it \begin{itemize}
\item A complete $\phi -$parabolic gradient expander soliton of nonnegative scalar curvature is Ricci flat.

\item A complete $\phi -$parabolic gradient steady soliton whose scalar curvature does not change sign is Ricci flat. Moreover, if $\phi$ is not constant then it is a product of a Ricci flat manifold with $\r $. Also, $\Sigma$ is diffeomorphic to $\r ^n$.

\item A complete $\phi -$parabolic gradient shrinking soliton of nonpositive scalar curvature is Ricci flat.
\end{itemize}}
\end{quote}

\begin{remark}
All the manifolds and submanifolds will be considered smooth, connected and orientable. Moreover, unless stated otherwise, they will be complete without boundary. 

We should also remark that the required differentiability is much lower of that we consider here, we prefer to avoid technical details assuming smoothness for clarifying the exposition and ideas involved in this paper.

Most of the above results can be generalized to the case that $\Sigma$ has finite index, that is, there is only a finite dimensional space of normal variations which strictly decrease the weighted area (see \cite{EBarBSharYWei}). Moreover, we could relax the hypothesis on the P-scalar curvature by either an integrability condition or a decay condition as in  \cite{PBerPCas12b} or \cite{JEsp12b}.
\end{remark}

\section{Gradient Schr\"{o}dinger operators}\label{Sect:SchrodingerFinite}\label{Sect:FiniteType}

We will follow the references \cite{ICha84,LEva98,DGilNTru83}. Consider $(\Sigma ,g)$ a compact Riemannian manifold with boundary $\partial \Sigma$ (possibly empty). Set $q \in C^\infty (\Sigma)$ and $X \in \campo (\Sigma)$ a smooth vector field along $\Sigma$. Consider the differential linear operator, called {\bf generalized Schr\"{o}dinger operator}, given by 
$$ \begin{matrix} L : & C_0 ^\infty (\Sigma) &\to & C ^\infty (\Sigma) \\
  & u & \to & Lu := \Delta u + g(X, \nabla u) + q \,u ,\end{matrix}$$where $\Delta $ and $\nabla$ are the Laplacian and Gradient with respect to the Riemannian metric $g$ (respectively) and $C_0 ^\infty (\Sigma)$ stands for the linear space of compactly supported piecewise smooth functions on $\Sigma$. We also denote by $\abs{\cdot }$ the norm with respect to the Riemannian metric $g$. Moreover,
\begin{itemize}
\item In the case that $X \equiv \nabla \phi$ for some smooth function $\phi$, i.e., $L _\phi u := \Delta u + g(\nabla \phi , \nabla u) + q u$, we call $L _\phi$ by {\bf gradient Schr\"{o}dinger operator}.

\item In the case that $X \equiv 0$, i.e., $L u := \Delta u  + q u$, we call $L$ simply by {\bf Schr\"{o}dinger operator}.
\end{itemize}

In general, a gradient Schr\"{o}dinger operator is not self-adjoint with respect to the standard $L^2-$inner product because of the first order term, but it is self-adjoint with respect to a weighted inner product, that is, $L_\phi$ is self-adjoint acting on $u \in C^\infty _0 (\Sigma)$ with respect to the weighted inner product given by
$$ \metal{u}{v}_\phi := \int _\Sigma u \, v \, e^\phi \, dv_\Sigma .$$

Given $L_\phi$ a gradient Schr\"{o}dinger operator, we say that $\lambda \in \r $ is an {\bf eigenvalue} of $L_\phi$ if there is a not identically zero function $u \in C^\infty _0 (\Sigma)$ so that 
$$ L_\phi u = - \lambda u . $$

In the case of gradient Schr\"{o}dinger operators, standard spectral theory (see \cite[Pages 335--336]{LEva98}) gives:

\begin{theorem}\label{Th:Evans}
Let $(\Sigma , g) $ be a compact Riemannian manifold with boundary $\partial \Sigma$ (possibly empty). Set $q,\phi \in C^\infty (\Sigma)$ and consider 
$$ L_\phi u := \Delta u + g(\nabla \phi, \nabla u) +qu \, , \, \, u \in C^\infty _0 (\Sigma) .$$

Then, 
\begin{itemize}
\item $L_\phi$ has real eigenvalues $\lambda ^\phi _1 < \lambda ^\phi _2 \leq \ldots $ with $\lambda^\phi _k \to + \infty$ as $k \to + \infty$.

\item There is an orthonormal basis $\set{u_k} \subset C^\infty _0 (\Sigma)$ for the weighted $L^2$-metric $\metal{\cdot}{\cdot}_\phi$ so that $L_\phi u_k = - \lambda ^\phi _k u_k$.

\item The lowest eigenvalue $\lambda ^\phi _1$ is characterized by 
$$ \lambda ^\phi_1 (\Sigma )= {\rm inf}\set{ \dfrac{\int _\Sigma \left( \abs{\nabla u}^2 -q u^2\right) e^\phi \, dv_\Sigma}{ \int_\Sigma u^2 e^\phi \, dv_\Sigma }\, : \, \, u \in C_0^\infty (\Sigma)\setminus \set{0}} .$$

\item Any eigenfunction for $\lambda ^\phi_1$ does not change sign and, consequently, if $u \in C^\infty _0 (\Sigma)$ is another solution to $L_\phi u = -\lambda ^\phi_1 u$, then $u$ is a constant multiple of $u_1$.
\end{itemize}
\end{theorem}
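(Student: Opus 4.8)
The plan is to deduce all four items from the self-adjointness of $L_\phi$ established in Proposition \ref{Prop:SelfAdjoint}, reducing everything to the classical spectral theory of self-adjoint uniformly elliptic operators by replacing the Riemannian measure $dv_\Sigma$ with the weighted measure $dm := e^\phi\, dv_\Sigma$. First I would set up the weighted Hilbert space $L^2_\phi(\Sigma) := L^2(\Sigma, dm)$ with inner product $\metag{\cdot}{\cdot}_\phi$, together with the weighted Sobolev space $H^1_\phi(\Sigma)$ and its subspace $H^1_{0,\phi}$ obtained as the closure of $C^\infty_0(\Sigma)$ (which encodes the Dirichlet condition when $\partial\Sigma \neq \emptyset$). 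Since $\Sigma$ is compact and $\phi$ is smooth, $e^\phi$ is bounded above and below by positive constants, so these weighted spaces agree as sets with the unweighted ones and carry equivalent norms; in particular the Rellich--Kondrachov embedding $H^1_{0,\phi} \hookrightarrow L^2_\phi$ remains compact. To $-L_\phi$ I would associate the symmetric bilinear form
$$ B[u,v] := \int_\Sigma \left( g(\nabla u, \nabla v) - q\, uv \right) e^\phi\, dv_\Sigma, $$
which is exactly the weighted Dirichlet form appearing on the second line of the proof of Proposition \ref{Prop:SelfAdjoint}.

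Next I would produce the spectrum. Choosing a constant $\sigma > \sup_\Sigma q$, the shifted form $B_\sigma[u,v] := B[u,v] + \sigma\metag{u}{v}_\phi$ is bounded and coercive on $H^1_{0,\phi}$ (a weighted G{\aa}rding inequality, using the uniform bounds on $e^\phi$), so Lax--Milgram furnishes a bounded solution operator $T := (-L_\phi + \sigma)^{-1}$ on $L^2_\phi$. By the compact embedding $H^1_{0,\phi} \hookrightarrow L^2_\phi$, $T$ is compact, and it is self-adjoint on $L^2_\phi$ since $B$ is symmetric (equivalently, by Proposition \ref{Prop:SelfAdjoint}). The spectral theorem for compact self-adjoint operators then yields an orthonormal basis $\set{u_k}$ of $L^2_\phi$ consisting of eigenfunctions of $T$ with eigenvalues $\mu_k \to 0^+$; writing $\lambda_k := 1/\mu_k - \sigma$ gives $L_\phi u_k = -\lambda_k u_k$ with $\lambda_1 \leq \lambda_2 \leq \cdots \to +\infty$, which is the first two items. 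Elliptic regularity applied to $L_\phi u_k = -\lambda_k u_k$ upgrades the $u_k$ to smooth functions. The variational formula for $\lambda_1$ follows from min--max for $T$: the largest eigenvalue $\mu_1$ equals $\sup_{u\neq 0}\metag{Tu}{u}_\phi/\metag{u}{u}_\phi$, which after the substitution $u = T f$ is equivalent to minimizing the weighted Rayleigh quotient $\int_\Sigma(\abs{\nabla u}^2 - q u^2)e^\phi\, dv_\Sigma \big/ \int_\Sigma u^2 e^\phi\, dv_\Sigma$ over $u \in C_0^\infty(\Sigma)\setminus\set{0}$, giving the third item.

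It remains to prove the fourth item, that any first eigenfunction has constant sign and that $\lambda_1$ is simple; this is the only part that is not purely formal. Given a first eigenfunction $u_1$, the a.e.\ identity $\abs{\nabla\abs{u_1}} = \abs{\nabla u_1}$ shows that $\abs{u_1}$ realizes the same weighted Rayleigh quotient, hence is itself a minimizer and therefore a first eigenfunction, solving $\Delta\abs{u_1} + g(\nabla\phi, \nabla\abs{u_1}) + (q+\lambda_1)\abs{u_1} = 0$ weakly and then, by regularity, classically. Choosing a constant $c$ with $q + \lambda_1 + c \geq 0$ on $\Sigma$, the nonnegative function $w := \abs{u_1}$ satisfies $\Delta w + g(\nabla\phi, \nabla w) - c\, w \leq 0$, so $w$ is a supersolution of an operator with nonpositive zeroth-order term; Hopf's strong maximum principle then forces $w > 0$ everywhere (as $w \not\equiv 0$). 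Thus $u_1$ never vanishes and so has constant sign. Finally, if $v$ were a first eigenfunction linearly independent of $u_1$, then for a suitable $t \in \r$ the function $u_1 - t v$ would be a first eigenfunction with a zero, contradicting the strict positivity just established; hence the $\lambda_1$-eigenspace is one-dimensional, which is the last claim.

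The main obstacle, and essentially the only genuinely analytic input beyond invoking standard theory, is this maximum-principle step: one must check that the drift $g(\nabla\phi, \nabla\cdot)$ together with the shift by $c$ keeps us in the regime where the Hopf principle applies. Since $\nabla\phi$ is a smooth, bounded vector field on the compact manifold $\Sigma$, the operator $\Delta + g(\nabla\phi,\nabla\cdot) - c$ is uniformly elliptic with bounded coefficients and nonpositive zeroth-order term, so the classical strong maximum principle applies verbatim; the presence of the density $e^\phi$ affects only the inner product and measure, not the pointwise ellipticity, which is precisely why the whole argument parallels the unweighted case.
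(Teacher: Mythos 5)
Your proposal is correct, and it coincides with the route the paper takes: the paper gives no proof of Theorem \ref{Th:Evans}, simply invoking ``standard spectral theory'' (citing Evans, pp.\ 335--336) after noting the self-adjointness of $L_\phi$ with respect to the weighted inner product, and your argument is precisely that standard theory (Lax--Milgram with a shift, compact resolvent via Rellich, the spectral theorem, and the $\abs{u_1}$/strong maximum principle argument for the sign and simplicity of $\lambda_1$) carried out in the weighted setting, where the uniform bounds on $e^\phi$ over the compact $\Sigma$ make everything go through verbatim.
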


In the case that $(\Sigma ,g)$ is complete and noncompact, $\phi \in C^\infty  (\Sigma)$, there may not be a lowest eigenvalue for $L_\phi$, however, we can still define the bottom of the spectrum (we still call it $\lambda ^\phi _1$) by 

$$ \lambda ^\phi_1 (\Sigma  ) = {\rm inf}\set{ \dfrac{\int _\Sigma \left( \abs{\nabla u}^2 -q u^2\right) e^\phi \, dv_\Sigma}{ \int_\Sigma u^2 e^\phi \, dv_\Sigma }\, : \, \, u \in C_0^\infty (\Sigma)\setminus \set{0}} ,$$where now the infimum is taken over piecewise smooth functions of compact support and we must allow $\lambda ^\phi_1 (\Sigma )= -\infty$. Another way to characteize $\lambda ^\phi _1 (\Sigma )$ is
$$ \lambda ^\phi _1 (\Sigma ) = {\rm inf}\set{  \lambda ^\phi_1 (\Omega ) \, : \, \, \Omega \subset \Sigma },$$where the infimum is taken over all relatively compact domains $\Omega \subset \Sigma$ with (at least) $C^1 $ boundary. 

\begin{definition}\label{Def:LStableGeneral}
Let $(\Sigma , g)$ be a complete manifold. Set $\phi \in C^\infty  (\Sigma)$ and consider $L_\phi$ a gradient Schr\"{o}dinger operator acting on $u \in C^\infty _0 (\Sigma)$. We say that $L_\phi$ is {\bf stable} if $\lambda ^\phi _1 (\Sigma ) \geq 0$.
\end{definition}

We continue by characterizing stable gradient Schr\"{o}dinger operators in terms of solutions to the equation $L_\phi u = -\lambda _1 ^\phi (\Sigma)$ by a variation of an argument of Fischer-Colbrie and Schoen. T. Colding and W. Minicozzi \cite{TColWMin12a,TColWMin12b} proved a particular case of the following result in the context of stable self-shrinkers. We will prove here the general version that will be important in what follows.

\begin{lemma}\label{Lem:Characterization}
Let $(\Sigma ,g)$ be a complete manifold and $\phi \in C^\infty (\Sigma)$. Then, the following statements are equivalents: 

\begin{enumerate}
\item[(1)] $L_\phi$ is stable.
\item[(2)] There exists a positive function $u$ on $\Sigma$ so that $L_\phi u = - \lambda^\phi _1 (\Sigma) u$, $\lambda ^\phi_1 (\Sigma ) \geq 0$.
\end{enumerate}
\end{lemma}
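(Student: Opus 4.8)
The plan is to treat the two implications separately, with the content lying almost entirely in $(1)\Rightarrow(2)$, since the bracketed clause $\lambda_1(\Sigma,\phi)\ge 0$ is already built into statement $(2)$. Throughout I write $\Delta_\phi u := \Delta u + \metag{\nabla \phi}{\nabla u}$ for the drift Laplacian, so that $L_\phi = \Delta_\phi + q$, and I use the integration-by-parts identity $\int_\Sigma (\Delta_\phi u)\,v\,e^\phi\,dv_\Sigma = -\int_\Sigma \metag{\nabla u}{\nabla v}\,e^\phi\,dv_\Sigma$ for compactly supported $u,v$, which is the infinitesimal form of Proposition \ref{Prop:SelfAdjoint} and makes $\Delta_\phi$ self-adjoint for the weighted measure $e^\phi\,dv_\Sigma$.

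For $(2)\Rightarrow(1)$ I would give the argument that a positive solution forces a lower bound on the bottom of the spectrum, so that the implication holds even without assuming $\lambda_1\ge 0$ a priori. Suppose $u>0$ satisfies $L_\phi u = -\mu u$. Setting $w := \log u$ and using $\Delta_\phi u = u(\Delta_\phi w + \abs{\nabla w}^2)$, the equation becomes $\Delta_\phi w = -\abs{\nabla w}^2 - (q+\mu)$. For any $f \in C_0^\infty(\Sigma)$, I multiply by $f^2$, integrate against $e^\phi\,dv_\Sigma$, and integrate the $\Delta_\phi w$ term by parts to obtain
$$ \int_\Sigma f^2 (q+\mu)\,e^\phi\,dv_\Sigma = \int_\Sigma \left(2 f\,\metag{\nabla f}{\nabla w} - f^2 \abs{\nabla w}^2\right) e^\phi\,dv_\Sigma. $$
The elementary Cauchy--Schwarz bound $2 f\,\metag{\nabla f}{\nabla w}\le \abs{\nabla f}^2 + f^2\abs{\nabla w}^2$ then yields $\int_\Sigma (\abs{\nabla f}^2 - q f^2)e^\phi\,dv_\Sigma \ge \mu\int_\Sigma f^2 e^\phi\,dv_\Sigma$, hence $\lambda_1(\Sigma,\phi)\ge \mu$ by the variational characterization. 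With $\mu = \lambda_1(\Sigma,\phi)\ge 0$ this gives stability.

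For $(1)\Rightarrow(2)$ I would run the exhaustion argument of Fischer-Colbrie and Schoen adapted to the weighted setting. Fix $p_0\in\Sigma$ and an increasing exhaustion $\Omega_1\subset\Omega_2\subset\cdots$ of $\Sigma$ by relatively compact domains with smooth boundary containing $p_0$. On each $\Omega_i$, Theorem \ref{Th:Evans} provides a first Dirichlet eigenvalue $\mu_i := \lambda_1(\Omega_i,\phi)$ with a first eigenfunction $u_i>0$ in the interior, $L_\phi u_i = -\mu_i u_i$; normalize $u_i(p_0)=1$. Domain monotonicity of the Rayleigh quotient gives $\mu_i\downarrow \lambda_1(\Sigma,\phi)=:\lambda_1\ge 0$, so the $\mu_i$ are bounded. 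On any compact $K\subset\Sigma$ (contained in $\Omega_i$ for $i$ large), the Harnack inequality for the uniformly elliptic operator $\Delta_\phi + (q+\mu_i)$ bounds $\sup_K u_i$ and $\inf_K u_i$ in terms of $u_i(p_0)=1$ uniformly in $i$, and interior Schauder estimates then give uniform $C^{2,\alpha}$ bounds on $K$. A diagonal Arzel\`a--Ascoli extraction produces a subsequence converging in $C^2_{loc}(\Sigma)$ to a limit $u\ge 0$ with $u(p_0)=1$ solving $L_\phi u = -\lambda_1 u$; the strong maximum principle (or the Harnack inequality again) forces $u>0$ everywhere, establishing $(2)$.

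The main obstacle is the limiting argument in $(1)\Rightarrow(2)$: one must ensure that the Harnack and Schauder constants can be chosen uniformly in $i$ on each fixed compact set. This is where boundedness of the $\mu_i$ (guaranteed by domain monotonicity and $\lambda_1\ge 0$) is essential, together with the smoothness of $\phi$ and $q$, which keep the lower-order coefficients of $\Delta_\phi+(q+\mu_i)$ controlled on compacta; the weight $e^\phi$ plays no role in the interior estimates beyond contributing a smooth first-order drift term. The normalization at $p_0$ is what prevents the extracted limit from degenerating to the zero function.
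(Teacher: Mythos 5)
Your proposal is correct and follows essentially the same route as the paper: the implication $(1)\Rightarrow(2)$ via exhaustion by compact domains, first Dirichlet eigenfunctions normalized at a basepoint, Harnack and Schauder estimates, and an Arzel\`a--Ascoli diagonal extraction; and the implication $(2)\Rightarrow(1)$ via the logarithmic substitution $w=\log u$, integration by parts against $f^2 e^\phi\,dv_\Sigma$, and the Cauchy--Schwarz bound on the cross term. The only (harmless) refinement is that your version of $(2)\Rightarrow(1)$ tracks the eigenvalue $\mu$ and yields $\lambda_1(\Sigma,\phi)\ge\mu$, whereas the paper only uses $L_\phi u\le 0$.
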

\begin{proof}
{\bf (1) implies (2):} We argue as in \cite[Lemma 9.25]{TColWMin12a}. Since $L_\phi$ is stable, Definition \ref{Def:LStableGeneral} yields that $\lambda ^\phi_1 (\Sigma ) \geq 0$. 

Fix a point $p \in \Sigma$ and let $B(r_k)$ be the geodesic ball in $\Sigma$ centered at $p$ of radius $r_k$. We can consider an increasing sequence $\set{r_k}$ so that $\partial B(r_k)$ is at least $C^1$ and $r_k \to + \infty$ as $k \to +\infty$.

It is clear that $\lambda ^\phi_1 (B(r_k)) > \lambda ^\phi_1 (\Sigma ) \geq 0$ and 
$$ \lim _{k \to +\infty} \lambda ^\phi_1 (B(r_k)) = \lambda ^\phi_1 (\Sigma ) \geq 0 . $$

From Theorem \ref{Th:Evans}, there exists a positive Dirichlet function $u_k$ on $B(r_k)$ so that 
$$ L_\phi u_k = - \lambda ^\phi _1 (B(r_k)) u_k ,$$so that (after multiplying by a constant) $u_k (p) =1$. This holds for all $k$.

Then, the sequence $\set{u_k}$ is uniformly bounded on compact sets of $\Sigma$ by the Harnack Inequality \cite[Theorem 8.20]{DGilNTru83}. Also, $\set{u_k}$ has all its derivatives uniformly bounded on compact subsets of $\Sigma$ by Schauder estimates \cite[Theorem 6.2]{DGilNTru83}. Therefore, Arzela-Ascoli's Theorem and a diagonal argument give us that a subsequence (that we still denote by $\set{u_k}$) converges on compact subsets of $\Sigma$ to a function $u \in C^\infty (\Sigma)$ which satisfies
$$ \left\{ \begin{matrix} 
L_\phi u = -\lambda ^\phi_1 (\Sigma ) u & \text{ in } & \Sigma \\
u \geq 0               & \text{ in } & \Sigma \\
u (p) = 1               & \text{ at } & p \in \Sigma ,
\end{matrix}\right. $$again, by the Harnack Inequality, we obtain that $u >0 $ in $\Sigma$. This proves (1) implies (2). 

\vspace{.3cm}

{\bf (2) implies (1):} We argue here as in \cite[Proposition 3.2]{TColWMin12b}. Let $u$ be the positive smooth function given by item (2). Then, it satisfies $L_\phi u \leq 0$ since $\lambda ^\phi_1 (\Sigma) \geq 0$. Set $w = \ln u$, then
$$ \Delta w =  \frac{\Delta u}{u} - \abs{\nabla w}^2 \leq - g(\nabla \phi , \nabla w) -q - \abs{\nabla w} ^2 ,$$which implies
$$ {\rm div}\left( e^\phi \nabla w \right) \leq  -\left( q + \abs{\nabla w}^2 \right)e^\phi .$$

Given $\psi \in C_0 ^\infty (\Sigma)$, applying Stokes\rq{} Theorem to ${\rm div}\left( \psi ^2 e^\phi \nabla w\right)$ gives
\begin{equation*}
\begin{split}
0 &= \int _\Sigma \left( 2 \psi e^\phi g(\nabla \psi , \nabla w) +\psi ^2 {\rm div}\left( e^\phi \nabla w\right)\right)  dv_\Sigma  \\
 &\leq \int _\Sigma \left( 2 \psi g(\nabla \psi , \nabla w) -\psi ^2 \left(q + \abs{\nabla w}^2\right)\right) e^\phi dv_\Sigma \\
 & \leq \int _\Sigma \left( \abs{\nabla \psi}^2 -\psi ^2 q \right)  e^\phi dv_\Sigma
\end{split}
\end{equation*}where we have used $2 \psi g(\nabla \psi , \nabla w) \leq  \abs{\nabla \psi}^2 + \psi ^2  \abs{\nabla w}^2$. Therefore, $\lambda ^\phi _1 (\Sigma ) \geq 0$.
\end{proof}

We can see from Theorem \ref{Th:Evans} that, if $\Sigma$ is compact and there exists a non identically zero solution of $L_\phi u =0$, $\phi \in C^\infty (\Sigma)$, then $u$ vanishes nowhere and the linear space of such functions is one dimensional. 

In the case $\nabla \phi \equiv 0$, J. M. Manzano, J. P\'{e}rez and M. M. Rodr\'{i}guez \cite{MManJPerMRod12} proved that the above property extends to the noncompact case when we assume the underlying manifold has finite capacity. Specifically, they proved that if $u$ is a bounded smooth function in the kernel of a nonnegative Schr\"{o}dinger operator $L := \Delta +q$ on a complete manifold of finite capacity, then $u$ is either identically zero or it has no zeroes, moreover the linear space of such functions is one dimensional.

We would like to extend this property to gradient Schr\"{o}dinger operators. First, recall that 
\begin{definition}\label{Def:parabolic}
A manifold with density $(\Sigma , g , \phi) $ is $\phi -$parabolic if there is no nonconstant nonnegative $\phi -$superharmonic function $u$, equivalently, if $\Delta _ \phi u = \Delta u + \metag{\nabla \phi}{\nabla u} \leq 0$ and $u \geq 0$ then $u$ is constant. 
\end{definition}

For any compact set $K \subset \Sigma$, we define the $\phi -$capacity of $K$ as 
$$ {\rm cap}^\phi (K) = {\rm inf}\set{ \int _\Sigma |\nabla u|^2  e^{\phi} dv_\Sigma \, : \, u \in Lip _0 (\Sigma) \text{ and } u _{|K} \equiv 1 } ,$$where $Lip _0 (\Sigma)$ is the set of compactly supported Lipschitz functions on $\Sigma$. Hence, we have the following characterization of $\phi - $parabolicity (cf. \cite{HuaLiuXia,AGri99})

\begin{proposition}
Let $(\Sigma , g , \phi )$ be a complete manifold with density. Then, the following are equivalent:
\begin{enumerate}
\item $\Sigma $ is $\phi -$parabolic.
\item ${\rm cap}^\phi (K)$ for some (then any) compact set $K \subset \Sigma$.
\item Any bounded $\phi -$superharmonic function on $\Sigma$ is constant. 
\end{enumerate}
\end{proposition}

Hence, this leads us to define

\begin{definition}\label{Def:FiniteType}
Let $(\Sigma ,g)$ be a complete manifold and $\phi \in C^\infty (\Sigma)$. We say that $(\Sigma , g , \phi)$ has {\bf finite $\phi$-capacity} if there exists a sequence of cut-off functions $\set{\psi _i}_i \subset C^\infty _0 (\Sigma)$ such that 
\begin{itemize}
\item $0 \leq \psi _i \leq 1 $ in $\Sigma$.

\item The compact sets $\Omega _i := \psi _i ^{-1}(1)$ form an increasing exhaustion of $\Sigma$.

\item The sequence of weighted energies $\set{ \int _\Sigma |\nabla \psi _i|^2 e^\phi \, dv_\Sigma }_i$ is bounded. 
\end{itemize}
\end{definition}

In the case $\nabla \phi =0$, Definition \ref{Def:FiniteType} says that $(\Sigma , g)$ has finite capacity (see \cite{AGri99}). Moreover, if $\Sigma$ has dimension $2$, $\Sigma $ is parabolic if, and only if, it is $\phi-$parabolic for any $\phi \in C^\infty (\Sigma)$ by the conformal invariance of the Dirichlet integral. Nevertheless, this is not longer true if ${\rm dim}(\Sigma) \geq 3$.

The following result, which extends \cite[Theorem 2.3]{MManJPerMRod12}, tells us that gradient Schr\"{o}dinger operators acting on function defined on a $\phi-$parabolic manifold $\Sigma$ behaves as in the compact case.

\begin{quote}
{\bf Theorem A: } {\it Let $(\Sigma ,g , \phi)$ be a complete manifold with finite $\phi-$capacity and take $q_1,q_2 \in C^\infty (\Sigma)$ such that $q_1 - q_2 \geq 0$ on $\Sigma$. Assume that: 
\begin{enumerate}
\item There exists a positive subsolution $u$ to the gradient Schr\"{o}dinger operator $L_{\phi,q_1} := \Delta + g (\nabla \phi , \nabla \cdot ) +q_1$, i.e., $u$ satisfies
$$ L_{\phi,q_1} u \leq 0 .$$
\item There exists a bounded function $ v \in C^\infty (\Sigma) $ so that 
 $$ L_{\phi,q_2} v := \Delta v + g (\nabla \phi , \nabla v ) +q_2 v \geq 0 .$$
\end{enumerate}

Then, $v/u$ is constant.} 
\end{quote}
\begin{proof}
Let $u,v \in C^\infty (\Sigma)$ as in the statement of the theorem. A straightforward computation shows that 
\begin{equation}\label{Eq:Div}
\frac{v}{u} {\rm div}\left( u^2 e^\phi \nabla (v/u)\right) = v \, {\rm div}(e^\phi \nabla v) - \frac{v^2}{u}{\rm div}(e^\phi \nabla u).
\end{equation}

Now, since $u$ and $v$ satisfies the differential inequalities in the statement of the theorem, multiplying by $e^\phi$, we get
\begin{equation*}
{\rm div}(e^\phi \nabla u) \leq -q_1 u e^\phi \quad \text{ and } \quad {\rm div}(e^\phi \nabla v) \geq -q_2 v e^\phi ,
\end{equation*}with this information and \eqref{Eq:Div}, we obtain
\begin{equation}\label{Eq:Div2}
\frac{v}{u} {\rm div}\left( u^2 e^\phi \nabla (v/u)\right) \geq (q_1 -q_2 )v^2 e^\phi \geq 0 .
\end{equation}

Consider the sequence $\set{\psi _i}_i \subset C^\infty _0 (\Sigma)$ given by Definition \ref{Def:FiniteType}. Then, 
\begin{equation*}
\begin{split}
 0 &= \int_ \Sigma {\rm div}\left( (\psi ^2 _i \frac{v}{u}) ( u^2 e^\phi \nabla (v/u))\right) dv_\Sigma \\
 &=  \int_ \Sigma \psi ^2 _i \frac{v}{u}{\rm div}\left(  u^2 e^\phi \nabla (v/u)\right) dv_\Sigma + \int_ \Sigma g\left(  \nabla \left(\psi ^2 _i \frac{v}{u}\right) , u^2 e^\phi \nabla (v/u)\right) dv_\Sigma \\
& \geq 2 \int _\Sigma \psi _i u v e^\phi g\left(  \nabla \psi  _i  ,  \nabla (v/u)\right) dv_\Sigma + \int _\Sigma \psi ^2_ i u^2 e^\phi \abs{\nabla (v/u)}^2 \,dv_\Sigma ,
\end{split}
\end{equation*}that is, using H\"{o}lder Inequality,
\begin{equation*}
\begin{split}
 \int _\Sigma \psi ^2_ i u^2 e^\phi \abs{\nabla (v/u)}^2 \,dv_\Sigma & \leq - 2 \int _\Sigma \psi _i u v e^\phi g\left(  \nabla \psi  _i  ,  \nabla (v/u)\right) dv_\Sigma \\
 & = - 2 \int _{supp(\psi _i) \setminus \Omega _i} \psi _i u v e^\phi g\left(  \nabla \psi  _i  ,  \nabla (v/u)\right) dv_\Sigma \\
 & \leq 2 \left( \int _{supp(\psi _i) \setminus \Omega _i} \psi ^2_ i u^2 e^\phi \abs{\nabla (v/u)}^2 \,dv_\Sigma\right)^{\frac{1}{2}} \left( \int _{supp(\psi _i) \setminus \Omega _i}  v^2 e^\phi \abs{\nabla \psi _i}^2 \,dv_\Sigma\right)^{\frac{1}{2}} \\
 & \leq 2 \left( \int _\Sigma \psi ^2_ i u^2 e^\phi \abs{\nabla (v/u)}^2 \,dv_\Sigma\right)^{\frac{1}{2}} \left( \int _\Sigma  v^2 e^\phi \abs{\nabla \psi _i}^2 \,dv_\Sigma\right)^{\frac{1}{2}} .
\end{split}
\end{equation*}

Now, since $(\Sigma ,g, \phi)$ has finite $\phi -$capacity and $v$ is bounded, there exists a constant $C$ so that 
$$ \int _\Sigma  v^2 e^\phi \abs{\nabla \psi _i}^2 \,dv_\Sigma \leq C \text{ for all } i ,$$which implies from the previous inequality that
\begin{equation*}
\int _\Sigma \psi ^2_ i u^2 e^\phi \abs{\nabla (v/u)}^2 \,dv_\Sigma  \leq 4 C \text{ for all } i,
\end{equation*}
therefore, we obtain  
\begin{equation}\label{Eq:Bound}
\int _\Sigma  u^2 e^\phi \abs{\nabla (v/u)}^2 \,dv_\Sigma  \leq 4 C 
\end{equation}and
\begin{equation}\label{Eq:Bound2}
\lim _{i\to + \infty }\int _{supp(\psi _i) \setminus \Omega _i} \psi ^2_ i u^2 e^\phi \abs{\nabla (v/u)}^2 \,dv_\Sigma  =0 .
\end{equation}

Thus, \eqref{Eq:Bound} and \eqref{Eq:Bound2} imply that
\begin{equation*}
\int _{\Omega _ i} u^2 e^\phi \abs{\nabla (v/u)}^2 \,dv_\Sigma \leq 2\sqrt{C} \left( \int _{supp(\psi _i) \setminus \Omega _i} \psi ^2_ i u^2 e^\phi \abs{\nabla (v/u)}^2 \,dv_\Sigma\right)^{\frac{1}{2}}  \to 0
\end{equation*}as $i$ goes to $+\infty$, thus $v/u$ is constant on $\Sigma$.
\end{proof}

As a consequence of the above Theorem A, we can extend the Naber-Yau Liouville Theorem (see \cite{PPetWWyl10}) to manifold with density of finite $\phi -$capacity.

\begin{theorem}\label{Th:NaberYau}
Let $(\Sigma ,g, \phi)$ be a complete complete manifold with finite $\phi-$capacity. Take $q \in C^\infty (\Sigma)$ and consider the gradient Schr\"{o}dinger operator $L_{\phi,q} := \Delta + g (\nabla \phi , \nabla \cdot ) +q$.

\begin{enumerate}
 \item Assume $q \geq 0$. Then any nonnegative subsolution $u$ to $L_{\phi, q}$, i.e., $u$ satisfies $ L_{\phi,q} u \leq 0 $, must be constant and either $u\equiv 0$ or $q \equiv 0$ on $\Sigma$.  In particular, any solution bounded above or below to $L _{\phi,q} u= 0$ must be constant.

\item  Assume $q \leq 0$ and $L_{\phi,q}$ is stable. If there is a bounded supersolution $u$, i.e., $u$ satisfies $ L_{\phi,q} u \geq 0 $, then $u$ is constant and either $u \equiv 0$ or $q\equiv 0$ on $\Sigma$.
\end{enumerate}
\end{theorem}
\begin{proof}

Let us prove each case:

\begin{enumerate}
\item Assume $q \geq 0$. A straightforward computation, as above, shows that 
\begin{equation*}
\begin{split}
0 &= \int _\Sigma {\rm div}\left( \psi _i ^2 e^{\phi-u} \nabla u \right) dv_\Sigma \\
  &= \int _\Sigma\psi _i ^2 e^{-u} {\rm div}\left(  e^\phi \nabla u \right) dv_\Sigma +\int _\Sigma g\left( \nabla (\psi _i ^2 e^{-u}),  e^\phi \nabla u \right) dv_\Sigma \\
 & \leq 2\int _\Sigma \psi _i e^{\phi-u} g\left( \nabla \psi _i ,  \nabla u \right) dv_\Sigma -\int _\Sigma \psi _i ^2 e^{\phi -u} \abs{ \nabla u }^2 dv_\Sigma -\int _\Sigma \psi _i ^2 e^{\phi -u} u q \, dv_\Sigma,
\end{split}
\end{equation*}therefore, using H\"{o}lder Inequality and $u\geq 0$, we obtain
\begin{equation*}
\begin{split}
 \int _\Sigma \psi _i ^2 e^{\phi-u} \abs{ \nabla u }^2 dv_\Sigma &\leq  \int _\Sigma \psi _i ^2 e^{\phi -u} \abs{ \nabla u }^2 dv_\Sigma +\int _\Sigma \psi _i ^2 e^{\phi -u} u q \, dv_\Sigma \\
 &\leq  2 \left( \int _{supp(\psi _i) \setminus \Omega _i} \psi ^2_i  e^{\phi -u} \abs{\nabla u}^2 \,dv_\Sigma\right)^{\frac{1}{2}} \left( \int _{supp(\psi _i) \setminus \Omega _i}   e^{\phi -u} \abs{\nabla \psi _i}^2 \,dv_\Sigma\right)^{\frac{1}{2}} \\
 & \leq  2 \left( \int _{supp(\psi _i) \setminus \Omega _i} \psi ^2_i  e^{\phi -u} \abs{\nabla u}^2 \,dv_\Sigma\right)^{\frac{1}{2}} \left( \int _{supp(\psi _i) \setminus \Omega _i}   e^\phi \abs{\nabla \psi _i}^2 \,dv_\Sigma\right)^{\frac{1}{2}}.
\end{split}
\end{equation*}

Thus, arguing as Theorem A, we obtain that $u$ must be constant and either $u\equiv 0$ or $q \equiv 0$.

\item Assume $q \leq 0$ and $u$ is bounded. Then, Theorem A implies that $u>0$. Therefore, calculating as above: 
\begin{equation*}
\begin{split}
0 &= \int _\Sigma {\rm div}\left( \psi _i ^2 u e^\phi \nabla u \right) dv_\Sigma \\
  &= \int _\Sigma\psi _i ^2 u \, {\rm div}\left(  e^\phi \nabla u \right) dv_\Sigma +\int _\Sigma e^\phi g\left( \nabla (\psi _i ^2 u), \nabla u \right) dv_\Sigma \\
 & \geq 2\int _\Sigma \psi _i u e^\phi g\left( \nabla \psi _i ,  \nabla u \right) dv_\Sigma + \int _\Sigma \psi _i ^2 e^\phi \abs{ \nabla u }^2 dv_\Sigma - \int _\Sigma \psi _i ^2 e^\phi u^2 q \, dv_\Sigma,
\end{split}
\end{equation*}that is, 
$$ \int _\Sigma \psi _i ^2 e^\phi \abs{ \nabla u }^2 dv_\Sigma - \int _\Sigma \psi _i ^2 e^\phi u^2 q dv_\Sigma \leq - 2\int _\Sigma \psi _i u e^\phi g\left( \nabla \psi _i ,  \nabla u \right) dv_\Sigma ,$$and we can finish as in the previous case.
\end{enumerate}
\end{proof}

It is important to remark here that Theorem A, and hence Theorem \ref{Th:NaberYau}, does not follow from \cite{MManJPerMRod12} by doing a conformal change of metric when ${\rm dim}(\Sigma)\geq 3$.

In fact, we can relax a little the hypothesis on $u$ in item $2.$ of Theorem \ref{Th:NaberYau}, namely

\begin{corollary}\label{Cor:NaberYau}
Let $(\Sigma ,g, \phi)$ be a complete manifold with finite $\phi-$capacity. Take $q \in C^\infty (\Sigma)$ and consider the gradient Schr\"{o}dinger operator $L_{\phi,q} := \Delta + g (\nabla \phi , \nabla \cdot ) +q$, where $q\leq 0$.

Assume there is a nonnegative supersolution $u$, i.e., $u$ satisfies $ L_{\phi ,q} u \geq 0 $, such that the sequence $\set{\int _\Sigma \abs{\nabla \psi _i}^2u^2 e^\phi \, dv_\Sigma }_i$ is uniformly bounded. 

Then $u$ is constant and either $u \equiv 0$ or $q\equiv 0$ on $\Sigma$. Moreover, if $q \equiv 0$, we only need to assume that $u$ is bounded below.
\end{corollary}
\begin{proof}
We argue as in item 2. That is, we have 
$$ \int _\Sigma \psi _i ^2 e^\phi \abs{ \nabla u }^2 dv_\Sigma - \int _\Sigma \psi _i ^2 e^\phi u^2 q dv_\Sigma \leq - 2\int _\Sigma \psi _i u e^\phi g\left( \nabla \psi _i ,  \nabla u \right) dv_\Sigma ,$$which implies
$$ \int _\Sigma \psi _i ^2 e^\phi \abs{ \nabla u }^2 dv_\Sigma  \leq 2 \left( \int _{supp(\psi _i) \setminus \Omega _i}  \psi _i ^2 e^\phi \abs{ \nabla u }^2 dv_\Sigma \right)^{1/2} \left( \int _{supp(\psi _i) \setminus \Omega _i}  u ^2 e^\phi \abs{ \nabla \psi _i }^2 dv_\Sigma\right)^{1/2} , $$therefore, since the sequence $\set{\int _\Sigma \abs{\nabla \psi _i}^2u^2 e^\phi \, dv_\Sigma }_i$ is uniformly bounded, we can finish as in Theorem A.
\end{proof}

Moreover, Theorem \ref{Th:NaberYau} also proves

\begin{corollary}\label{Cor:FiniteParabolic}
Let $(\Sigma ,g, \phi)$ be a complete manifold with density. It has finite $\phi -$capacity if, and only if, it is $\phi -$parabolic.
\end{corollary}

And, as a consequence of Theorem A and Corollary \ref{Cor:FiniteParabolic}, we obtain
\begin{theorem}\label{Th:Kernel}
Let $(\Sigma ,g, \phi)$ be a complete $\phi-$parabolic manifold. Take $q \in C^\infty (\Sigma)$ and assume that the gradient Schr\"{o}dinger operator $L_{\phi,q} := \Delta + g (\nabla \phi , \nabla \cdot ) +q$ is stable. 

If $u$ is a bounded smooth function in the kernel of $L_{\phi, q}$, then $u$ is either identically zero or it has no zeroes on $\Sigma$. Hence, the linear space of such functions is one dimensional.
\end{theorem}

\section{Weighted stable surfaces and gradient Schr\"{o}dinger operators}\label{Sect:Schrodinger}

M. Gromov \cite{MGro03} considered manifolds with density as {\it mm-spaces} and introduced the generalized mean curvature, or {\bf weighted mean curvature}, of an oriented hypersurface $\Sigma \subset (\amb ,g , \phi)$  as a natural generalization of the mean curvature. The weighted mean curvature is defined by
\begin{equation}\label{Eq:Hd}
H_\phi = H + \metag{N}{\camb  \phi}, 
\end{equation}where $H$ denotes the usual mean curvature of $\Sigma$ and $N$ is the unit normal vector field along $\Sigma$.

\begin{remark}
In this paper, the mean curvature will be just the trace, and not the mean of the trace. Moreover, we assume the ambient manifold and the hypersurfaces immersed in it are orientables. 
\end{remark}

Let $(\amb , g, \phi)$ be a manifold with density $\phi \in C^{\infty}(\amb)$. For any Borel set $\Omega \subset \amb$ with boundary $\Sigma := \partial \Omega$ and inward unit normal $N$ along $\Sigma$, the {\bf weighted volume} of $\Omega$ and the {\bf weighted area} of $\partial \Omega$ are given by
$$ {\rm V}_\phi (\Omega) := \int _{\Omega} e^\phi \, dv  , \, \, \, {\rm A}_\phi (\Sigma) := \int _{\Sigma} e^\phi \, dv_\Sigma ,$$where $dv$ and $dv_\Sigma$ are the volume element and area element with respect to $g$ respectively. 

\begin{remark}
Note that,  when $\phi \equiv 0$,  ${\rm V}_\phi$ and ${\rm A}_\phi$ are nothing but the usual volume and area with respect to $g$. 
\end{remark}

Let $\Psi _t$, $t \in (-\eps, \eps)$, be a smooth family of diffeomorphisms of $\amb$ so that $\Psi _0 $ is the identity. Assume
$$ \derb{}{t}{t=0} \left. \Psi _t \right. _{| \Sigma} = X + u N,$$where $X$ is a tangential vector field along $\Sigma $ and $u$ is a smooth function with compact support on $\Sigma$. An immersed hypersurface $\Sigma \subset (\amb , g, \phi )$ is {\bf weighted minimal}, in short $\phi-$minimal, if it is a critical point of the weighted area functional, i.e., if 
$$ \derb{}{t}{t=0}  A_\phi (\Psi _t (\Sigma)) = 0 ,$$for all compactly supported variations. From Bayle's variational formulae \cite{VBay03} we can see that 
$$ \derb{}{t}{t=0}  A_\phi (\Psi _t (\Sigma)) = \int _\Sigma H_\phi \, u \, e^\phi  \, dv_\Sigma ,$$that is, 

\begin{definition}\label{Def:Stationary}
$\Sigma $ is {\bf $\phi -$minimal} if and only if the weighted mean curvature vanishes. More generally, an immersed hypersurface $\Sigma$ has {\bf constant weighted mean curvature} $H_\phi = H_0$ if and only if 
$$ \derb{}{t}{t=0}  \left( A_\phi (\Psi _t (\Sigma)) -H_0 \, V_\phi (\Psi _t (\Sigma)) \right)= 0$$for all compactly supported variations $\set{\Psi _t}$ (cf. \cite{ACanVBayFMorCRos08}).
\end{definition}

As a first sight, the set of all constant weighted mean curvature hypersurfaces is too big. So, we can focus on those which (locally) minimize the functional 
$$F_\phi (\Psi _t(\Sigma)):=  A_\phi (\Psi _t (\Sigma)) -H_\phi \, V_\phi (\Psi _t (\Sigma))  $$up to second order, that is, weighted stationary domains with nonnegative second variation for all compactly 
supported variations.

\begin{definition}\label{Def:StableL}
We say that a constant weighted mean curvature hypersurface $\Sigma \subset (\amb ,g, \phi)$ is {\bf stable} if 
$$ \derb{\mbox{}^2}{t^2}{t=0}  \left( A_\phi (\Psi _t (\Sigma)) -H_\phi V_\phi (\Psi _t (\Sigma)) \right) \geq  0,$$for all compactly supported variations $\set{\Psi _t}$. In this case, we say that $\Sigma$ is {\bf weighted $H_\phi -$stable}, in short, {\bf $H_\phi-$stable}.
\end{definition}

Here, we are adopting the strong notion of stability and not the weak version that deals with isoperimetric problems (see \cite{ACanVBayFMorCRos08} for details). From Bayle's variational formula \cite{VBay03}, one can prove:

\begin{lemma}\label{Lem:SecondVariation}
Consider a weighted stationary open set $\Omega \subset (\amb , g, \phi )$. Let $N$ be the inward unit normal along $\Sigma = \partial \Omega$, and $H_\phi$ the constant weighted mean curvature of $\Sigma$ with respect to $N$. Consider a compactly supported variation of $\Omega$, $\Psi _t$, with normal part $u N$ on $\Sigma$. Then, we have
\begin{equation}\label{Eq:2Variation1}
\int_\Sigma \left( \ricd (N,N) + |A|^2\right)u ^2 e^\phi  \, dv_\Sigma \leq \int _\Sigma |\nabla u|^2 e^\phi \, dv_\Sigma ,
\end{equation}where $|A|^2$ is the squared norm of the second fundamental form and $\ricd $ is the Bakry-\'{E}mery-Ricci curvature.

Equivalently, we have:
\begin{equation}\label{Eq:2Variation}
F_\phi '' (0) = - \int _{\Sigma}  u \, L _\phi u \, e^\phi \, dv_\Sigma ,
\end{equation}where
\begin{equation}\label{Eq:Lphi}
L_\phi u := \Delta u + \metag{\nabla  \phi }{\nabla u} + (|A|^2 + \ricd (N,N))u .
\end{equation}

Here $\Delta$ and $\nabla $ are the Laplacian and Gradient operators with respect to the induced metric on $\Sigma$, $A$ is the second fundamental form of $\Sigma$, $\ricd $ is the Bakry-\'{E}mery-Ricci tensor of $(\amb , g, \phi )$.
\end{lemma}

As we can see from Lemma \ref{Lem:SecondVariation}, a weighted $H_\phi -$stable surface has a gradient Schr\"{o}dinger operator, $L_\phi$, given by \eqref{Eq:Lphi}, associated to it. Hence, it is clear from the definitions that

\begin{proposition}
 $\Sigma$ is a weighted $H_\phi -$stable in the sense of Definition \ref{Def:StableL} if, and only if, gradient Schr\"{o}dinger operator, $L_\phi$, given by \eqref{Eq:Lphi} is stable in the sense of Definition \ref{Def:LStableGeneral}
\end{proposition}

\subsection{Complete weighted stable hypersurfaces}

We continue by classifying stable hypersurfaces in manifolds with density that verify a lower bound on its Bakry-\'{E}mery-Ricci tensor that extends a recent result by G. Liu \cite{GLiu}. Specifically:

\begin{theorem}\label{Th:Hypersurface} 
Let $\Sigma \subset (\amb , g , \phi)$ be a $\phi -$parabolic complete $H_\phi -$stable hypersurface and assume that $\ricd \geq k$, $k\geq 0$. Then, $\Sigma$ is totally geodesic and $\ricd (N,N)\equiv 0$ along $\Sigma$. 
\end{theorem}
\begin{proof}
Since $\Sigma \subset (\amb , g , \phi) $ is $H_\phi -$stable, the gradient Schr\"{o}dinger operator, given by \eqref{Eq:Lphi} 
$$L_\phi u := \Delta u + \metag{\nabla \phi}{\nabla u} + q _\phi u,$$ where $q_\phi := \ricd (N,N) + |A|^2$, is stable in the sense of Definition \ref{Def:StableL}. Therefore, from Lemma \ref{Lem:Characterization}, there exists a positive subsolution, i.e., there exists $u>0$ such that $L_\phi u \leq 0$. Moreover, $q _\phi \geq 0$ since $\ricd \geq 0$. 

Thus, Theorem \ref{Th:NaberYau} implies that $q_\phi \equiv 0$, that is, $\Sigma$ is totally geodesic and $\ricd (N,N) \equiv 0$ along $\Sigma$.
\end{proof}

We shall remark the difference with the standard (unweighted) case. In \cite{DFisRSch80}, D. Fischer-Colbrie and R. Schoen proved that a complete stable minimal surface immersed in a manifold with nonnegative Ricci curvature must be totally umbilic and the Ricci curvature in the normal direction must vanish along the surface. Manzano-P\'{e}rez-Rodr\'{i}guez \cite{MManJPerMRod12} extended the above result for hypersurfaces

\begin{quote}
{\bf Theorem \cite{MManJPerMRod12} :} {\it Any complete parabolic two-sided stable minimal hypersurface in a manifold of nonnegative Ricci curvature must be totally geodesic and the Ricci curvature in the normal direction must vanish along the hypersurface.} 
\end{quote}

The above result is not necessary true if we remove the parabolicity condition, which states the nontriviallity of the above result. Our Theorem \ref{Th:Hypersurface} is the extension to manifolds with density and we must remark that our result does not follows from Manzano-P\'{e}rez-Rodr\'{i}guez's Theorem.

\subsection{Complete weighted stable surfaces}

From this point on, we will focus on two dimensional surfaces $\Sigma$ immersed in a three-dimensional manifold with density $(\amb , g ,\phi)$. In G. Perelman's work \cite{GPer} on the Poincar\'{e} conjecture, Perelman introduced the geometric quantity $\scad$, called the {\bf Perelman Scalar Curvature}, in short P-scalar curvature, and it given by 
\begin{equation}\label{Eq:Scad}
\scad = R - 2 \Delta _g  \phi - \abs{\camb \phi}^2 ,
\end{equation}where $\Delta _g$ and $\camb$ are the Laplacian and Gradient operators with respect to the ambient metric $g$ respectively. Moreover, Perelman proved that $\scad$ is not the trace of $\ricd$ but they are related by a Bianchi type identity. 

Now, we will see proof the Main Lemma in this section, we will associate a self-adjoint Schr\"{o}dinger operator $L_0$ to a weighted $H_\phi -$stable surface. 

\begin{quote}
{\bf Main Lemma: }{\it Let $\Sigma \subset (\amb ^3, g , \phi)$ be a complete weighted $H_\phi -$stable surface. Then, 
\begin{equation}\label{LDensity}
\int_ \Sigma( V - a K) f^2dv_\Sigma \leq  \int _\Sigma  |\nabla f|^2  dv_\Sigma 
\end{equation} for any $f \in C_0 ^\infty (\Sigma)$ compactly supported piecewise smooth function, where 
\begin{equation}\label{V}
V:=\frac{1}{3}\left( \frac{1}{2}\scad  +\frac{1}{2}H_\phi ^2+ \frac{1}{2}|A|^2 +\frac{1}{8}|\nabla  \phi|^2\right) \text{ and  } a:= \frac{1}{3}.
\end{equation}

In other works, the Schr\"{o}dinger operator 
$$ L_0 := \Delta - a K + V $$is stable in the sense of Definition \ref{Def:LStableGeneral}.}
\end{quote}
\begin{proof}
A tedious but straightforward computation shows that 
\begin{equation}\label{Eq:RicciScalar}
 \ricd (N,N) + |A|^2 = \frac{1}{2}\scad - K + {\rm div}(X) + F, 
\end{equation}where $K$ is the Gaussian curvature of $\Sigma$, ${\rm div}$ is the divergence operator on $\Sigma$, and 
\begin{equation*}
\begin{split}
 X &= \nabla  \phi ,\\
 F & = \frac{1}{2}\left( H_\phi ^2 + |A|^2 + |\nabla  \phi|^2\right) .
\end{split}
\end{equation*}

We can also easily check that 
$$ \frac{1}{2}\scad - K + {\rm div}(X) + F = 3 V - K +{\rm div}(X)+ \frac{3}{8}|\nabla  \phi|^2 .$$

Let $f \in C_0 ^\infty (\Sigma) $ and set $u := e ^{-\phi/2} f \in C^\infty _0 (\Sigma)$. Plugging $u= f e ^{-\phi/2}$ in \eqref{Eq:2Variation1} and using the above equalities, we have 
\begin{equation*}
\int_ \Sigma \left( 3 V - K + {\rm div}(X) + \frac{3}{8}|\nabla  \phi |^2\right) f^2dv_\Sigma \leq \int _\Sigma \left( |\nabla f|^2 - f \metag{\nabla f}{\nabla  \phi } + \frac{f^2}{4}|\nabla  \phi|^2\right) dv_\Sigma .
\end{equation*}

Now, the Divergence Theorem yields
$$ \int _\Sigma f^2 {\rm div}(X)  \, dv_\Sigma = -2 \int _\Sigma f \metag{\nabla f}{\nabla  \phi} \, dv_\Sigma ,$$
and using the inequality 
$$  f \metag{\nabla f}{\nabla  \phi } \leq 2 |\nabla f|^2 + \frac{1}{8}|\nabla  \phi|^2 f^2 $$we obtain

\begin{equation*}
\int_ \Sigma \left( 3V - K +\frac{3}{8}|\nabla  \phi|^2 \right ) f^2dv_\Sigma \leq  \int _\Sigma  \left(3 |\nabla f|^2 + \frac{3}{8}|\nabla  \phi |^2 f^2 \right)dv_\Sigma ,
\end{equation*}or equivalently, 
\begin{equation*}
\int_ \Sigma( V - a K) f^2dv_\Sigma \leq  \int _\Sigma  |\nabla f|^2  dv_\Sigma ,
\end{equation*}as desired.
\end{proof}

The above Main Lemma allows us to use the strong machinery developed for Schr\"{o}dinger operators since the pioneer work of T. Colding and W. Minicozzi \cite{TColWMin02} and improved in \cite{PCas06,JEsp12a, JEsp12b,JEspHRos11,WMeeJPerARos08} and references therein. 

The first thing we shall observe is when we can have a complete noncompact weighted $H_\phi -$stable surface under conditions on the P-scalar curvature. The following result is the extension of the diameter estimate for stable $H-$surfaces given by H. Rosenberg \cite{HRos06}:

\begin{theorem}\label{Th:Diameter}
Let $\Sigma \subset (\amb , g , \phi )$ be a weighted $H_\phi -$stable surface with boundary $\partial \Sigma$. If 
$$\scad + \frac{1}{4}|\nabla  \phi|^2+ H^2_\phi \geq c >0 \text{ on } \Sigma,$$ then 
$$ d_\Sigma \left( p , \partial \Sigma \right) \leq \frac{2\pi }{\sqrt{3c}} \text{ for all } p \in \Sigma ,$$
where $d_\Sigma$ denotes the intrinsic distance in $\Sigma$. Moreover, if $\Sigma$ is complete without boundary, then it must be topologically a sphere.
\end{theorem}
\begin{proof}
The condition $\scad + \frac{1}{4}|\nabla  \phi|^2+ H^2_\phi  \geq c >0$ implies that $V \geq c >0$ for some positive constant $c$. Therefore, \cite[Theorem 2.8]{WMeeJPerARos08} with $a=1/3$ implies the result. 
\end{proof}

The above result says that, in a manifold with density $(\amb , g , \phi) $ and $\scad + \frac{1}{4}|\nabla  \phi|^2\geq 0$, the only complete noncompact weighted $H_\phi -$stable surface we shall consider are the $\phi -$minimal ones. So, the next step is to extend the well-know result of D. Fischer-Colbrie and R. Schoen \cite{DFisRSch80} on the topology and conformal type of complete noncompact stable minimal surfaces. P. T. Ho \cite{PHo10} extended Fischer-Colbrie-Schoen result under the additional hypothesis that $|\nabla \phi| $ is bounded and $\scad \geq 0$ on $\Sigma$. 

Here, we drop that condition and relax the hypothesis on the P-scalar curvature. Specifically,

\begin{theorem}\label{Th:FCS}
Let $\Sigma \subset (\amb ^3, g , \phi )$ be a complete (noncompact) weighted stable minimal surface where $\scad + \frac{1}{4}|\nabla  \phi|^2 \geq 0$. Then, $\Sigma$ is conformally equivalent either to the complex plane $\c$ or to the cylinder $\c \setminus \set{0}$. In the latter case, $\Sigma$ is totally geodesic, flat and $\scad  + \frac{1}{4}|\nabla  \phi|^2\equiv 0$ along $\Sigma$.
\end{theorem}
\begin{proof}
From the Main Lemma, $L := \Delta - a K + V $ is stable in the sense of operators with $1/4 < a= 1/3$ and $V \geq 0$. From \cite{PCas06} or \cite{WMeeJPerARos08}, we can see that $\Sigma$ is conformally equivalent either to the plane or to the cylinder. 

In the latter case, either \cite[Lemma 2.1]{JEsp12a} or \cite[Theorem 1.3]{PBerPCas12b} give us that $K \equiv 0$ and $V \equiv 0$, which implies that $|A|^2 \equiv 0$ and $\scad  + \frac{1}{4}|\nabla  \phi|^2\equiv 0$ is constant along $\Sigma$.
\end{proof}

We must point out that the results in this section are nontrivial and do not follow from a conformal change of the ambient metric. 

\section{Applications}\label{Sect:Applications}

\subsection{Mean Curvature Flow}\label{Sect:MCF}

Self-similiar solutions to the mean curvature flow in $\r ^{n+1}$ can be seen as weighted minimal hypersurfaces in the Euclidean space endowed with the corresponding density.  Huisken \cite{GHui90} and T. Colding and W. Minicozzi \cite{TColWMin12a,TColWMin12b} proved this relationship for self-shrinker (and also self-expander). Moreover, T. Illmanen \cite{TIll} showed that translating solitons of the Mean Curvature Flow can be seen as weighted minimal hypersurfaces. We will explain this in more detail. 

\subsubsection{Self-Similar solutions}

Let $X: (0,T) \times \Sigma \to \r ^{n+1}$ be a one parameter family of smooth hypersurfaces moving by its mean curvature, that is, $X$ satisfies
$$ \derv{X}{t} = - H \, N  $$where $N$ is the unit normal along $\Sigma _t = X(t, \Sigma)$ and $H$ is its mean curvature. Self-similiar solutions to the mean curvature flow are a special class of solutions, they correspond to solutions that a later time slice is scaled (up or down depending if it is either expander or shrinker) copy of an early slice. In terms of the mean curvature, $\Sigma$ is said to be a self-similar solution if it satisfies the following equation
\begin{equation}\label{Eq:Selfsimilar}
H = -\frac{c}{2}\meta{x}{N},
\end{equation}where $c=\pm 1$, $x$ is the position vector in $\r ^{n+1}$ and $\meta{\cdot }{ \cdot }$ is the standard Euclidean metric. Here, if $c= -1$ then $\Sigma$ is said a {\bf self-shrinker} and if $c=+1$ then $\Sigma $ is called {\bf self-expander}. 

It is straightforward to check that self-shrinker (resp. self-expander) are weighted minimal hypersurfaces in $(\r ^{n+1}, g_0 , \phi _{-1})$ with density $\phi _{-1} := -\frac{|x|^2}{4}$ (resp. $\phi _{+1} := \frac{|x|^2}{4}$).  Hence, one can define: 

\begin{definition}\label{Def:SelfStable}
We say that a self-shrinker (resp. expander)  $\Sigma$ is {\bf $L-$stable} if it is stable as a weighted minimal hypersuface in $(\r ^{n+1} , \meta{\cdot }{\cdot } , \phi _{-1})$ (resp. $(\r ^{n+1} , \meta{\cdot }{\cdot } , \phi _{+1})$).
\end{definition}

We also do the following definition for the sake of clearness:

\begin{definition}\label{Def:SelfFinite}
Let $\Sigma \subset \r ^{n+1}$ be a complete self-shrinker (resp. expander). We say that $\Sigma $ is {\bf weighted parabolic} if $(\Sigma , g , \phi _{-1})$ is $\phi _{-1}-$parabolic (resp. if $(\Sigma , g ,\phi _{+1})$ is $\phi_{+1}-$parabolic). 
\end{definition}

We will apply Theorem A for extending a recent result of Colding-Minicozzi \cite{TColWMin12a,TColWMin12b} on the nonexistence of certain self-shrinkers. Specifically, 

\begin{theorem}\label{Th:FiniteMCF}
There are no complete weighted parabolic $L-$stable self-shrinkers in $\r ^{n+1}$.
\end{theorem}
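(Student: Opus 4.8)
The plan is to reduce the statement to the vanishing/rigidity dichotomy furnished by Theorem \ref{Th:FiniteType} (equivalently, by the first part of Theorem \ref{Th:NaberYau}), the only genuinely geometric input being the identification of the stability operator and the computation of its potential.

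First I would write down the $L-$stability operator explicitly. A self-shrinker is a weighted minimal hypersurface in $(\r^{n+1}, \meta{\cdot}{\cdot}, \phi_{-1})$ with $\phi_{-1} = -\abs{x}^2/4$, so by Lemma \ref{Lem:SecondVariation} the relevant operator is $L_\phi u = \Delta u + \metag{\nabla \phi_{-1}}{\nabla u} + q\,u$ with potential $q = \abs{A}^2 + \ricd(N,N)$. Since the ambient space is flat, $\ricd(N,N) = -\camb^2 \phi_{-1}(N,N)$; and as $\camb^2\phi_{-1} = -\tfrac12\,\meta{\cdot}{\cdot}$ identically, one gets $\ricd(N,N) = \tfrac12$. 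Hence the potential is $q = \abs{A}^2 + \tfrac12 \geq \tfrac12 > 0$ everywhere on $\Sigma$. This strict positivity is the crux of the argument, and it is exactly where the shrinker hypothesis enters: for an expander the sign of $\camb^2\phi_{+1}$ flips and $q = \abs{A}^2 - \tfrac12$ need not be nonnegative, which is why Proposition \ref{Prop:SelfExpander} must assume $\abs{A}^2 \geq 1/2$.

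Next, since $\Sigma$ is $L-$stable we have $\lambda_1(\Sigma, \phi_{-1}) \geq 0$, and Lemma \ref{Lem:Characterization} produces a positive function $u > 0$ on $\Sigma$ with $L_\phi u = -\lambda_1 u \leq 0$; that is, $u$ is a positive subsolution of $L_{\phi,q_1}$ with $q_1 = q$. Taking $q_2 \equiv 0$ and $v \equiv 1$ gives a bounded function with $L_{\phi,0} v = 0 \geq 0$, while $q_1 - q_2 = q \geq 0$. Finite type of $\Sigma$ means finite type of $(\Sigma, g, \phi_{-1})$, so all hypotheses of Theorem \ref{Th:FiniteType} hold and we conclude that $v/u = 1/u$ is constant, i.e.\ $u$ is a positive constant.

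Finally I would feed this back into the eigenvalue equation: with $u$ constant the two derivative terms vanish, so $L_\phi u = q\,u = -\lambda_1 u$, giving $q \equiv -\lambda_1 \leq 0$ on $\Sigma$. This contradicts $q \geq \tfrac12 > 0$, so no complete $L-$stable self-shrinker of finite type can exist. Alternatively, once the potential $q = \abs{A}^2 + \tfrac12$ is seen to be nonnegative and not identically zero, the positive subsolution $u$ from Lemma \ref{Lem:Characterization} contradicts part (1) of Theorem \ref{Th:NaberYau} directly. The only real work is the Hessian computation fixing $\ricd(N,N) = \tfrac12$; everything after that is a formal application of the finite-type machinery.
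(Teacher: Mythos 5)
Your proposal is correct and follows essentially the same route as the paper: identify the stability operator with potential $q=\abs{A}^2+\ricd(N,N)=\abs{A}^2+\tfrac12>0$, use Lemma \ref{Lem:Characterization} to produce a positive subsolution, and invoke the finite-type machinery (the paper applies Theorem \ref{Th:NaberYau}(1) directly, which is the alternative you mention at the end; your primary route through Theorem \ref{Th:FiniteType} with $v\equiv 1$, $q_2\equiv 0$ is an equivalent minor variant). The Hessian computation $\camb^2\phi_{-1}=-\tfrac12\meta{\cdot}{\cdot}$ giving $\ricd(N,N)=\tfrac12$ matches the potential the paper writes down.
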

\begin{proof}
Let $\Sigma \subset (\r^{n+1},\meta{\cdot }{\cdot } , \phi _{-1})$ be a self-shrinker, i.e., it satisfies $H = \frac{1}{2}\meta{x}{N}$. Assuming $\Sigma$ is $L-$stable, we obtain that the gradient Schr\"{o}dinger operator (see Lemma \ref{Lem:SecondVariation}) given by
$$ L := \Delta  +\metag{\nabla \phi _{-1}}{ \nabla \cdot } + (|A|^2 + \frac{1}{2}) ,$$is stable in the sense of Definition \ref{Def:LStableGeneral}. From Lemma \ref{Lem:Characterization}, there exists a positive subsolution, that is, there exists $u >0$ so that 
$$ L u \leq 0 . $$

Moreover, $q := |A|^2 + \frac{1}{2} \geq \frac{1}{2} >0$. Thus, Theorem \ref{Th:NaberYau} says that $u$ is constant and $q \equiv 0$, which is a contradiction. 
\end{proof}

T. Colding and W. Minicozzi \cite{TColWMin12a,TColWMin12b} proved that there are no $L-$stable self-shrinker $\Sigma$ with polynomial area growth. Polynomial area growth implies that 
$$ \int _{B(i+1) \setminus B(i)} \phi _{-1} \, dv \to 0 ,$$where $B(r)$ denotes the geodesic ball of radius $r$ centered at a fixed point. In particular, $\Sigma$ is weighted parabolic. 

As far as we know, there are no general classification results for $L-$stable self-expanders. We can, at least, give some condition for the nonexistence:

\begin{proposition}\label{Prop:SelfExpander}
There are no complete weighted parabolic $L-$stable self-expanders in $\r ^{n+1}$ with $|A|^2 \geq 1/2$.
\end{proposition}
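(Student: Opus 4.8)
The plan is to mirror the proof of Theorem \ref{Th:FiniteMCF} exactly, substituting the self-expander geometry for the self-shrinker geometry and exploiting the extra hypothesis $|A|^2 \geq 1/2$. Let $\Sigma \subset (\r^{n+1}, \meta{\cdot}{\cdot}, \phi_{+1})$ be a complete $L-$stable self-expander of finite type, so that $H = -\frac{1}{2}\meta{x}{N}$ and the relevant density is $\phi_{+1} = \frac{|x|^2}{4}$. By Lemma \ref{Lem:SecondVariation}, $L-$stability as a weighted minimal hypersurface means that the gradient Schr\"{o}dinger operator
$$ L := \Delta + \metag{\nabla \phi_{+1}}{\nabla \cdot} + (|A|^2 + \tfrac{1}{2}) $$
is stable in the sense of Definition \ref{Def:LStableGeneral}. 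The only structural change from the shrinker case is the sign appearing in the potential; I would first verify that for the expander the second-variation potential again has the form $|A|^2 + q_0$ with $q_0$ coming from $\ricd(N,N)$, and check the precise constant.

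The crux is to recover a \emph{nonnegative} potential so that part (1) of Theorem \ref{Th:NaberYau} applies. First I would invoke Lemma \ref{Lem:Characterization}: since $L$ is stable, there exists a positive function $u > 0$ on $\Sigma$ with $L u \leq 0$, i.e. $u$ is a positive subsolution. Here is where the hypothesis $|A|^2 \geq 1/2$ enters. For the expander the potential is $q := |A|^2 - \tfrac{1}{2}$ (the sign of the self-similar term flips relative to the shrinker), and $|A|^2 \geq 1/2$ guarantees $q \geq 0$ on $\Sigma$. With $q \geq 0$, a positive subsolution $u$, and $\Sigma$ of finite type, the operator $L_{\phi_{+1},q}$ satisfies all the hypotheses of Theorem \ref{Th:NaberYau}(1), whence $u$ is constant and either $u \equiv 0$ or $q \equiv 0$. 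Since $u > 0$, we conclude $q \equiv 0$ on $\Sigma$, i.e. $|A|^2 \equiv 1/2$ everywhere.

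At this point I would derive a contradiction from $|A|^2 \equiv \tfrac12$ forcing constancy that conflicts with $L u \le 0$ being an equality with zero potential. Concretely, with $q \equiv 0$ and $u$ constant positive, the subsolution inequality $Lu \le 0$ becomes $q \cdot u = (|A|^2 - \tfrac12)u \le 0$, which is consistent only at equality; I would then push on the geometric meaning of $|A|^2 \equiv \tfrac12$. The main obstacle I anticipate is precisely this last step: unlike the shrinker case, where $q = |A|^2 + \tfrac12 \geq \tfrac12 > 0$ gives an \emph{immediate} contradiction ($q$ cannot vanish), the expander only yields $q \geq 0$ with possible vanishing, so the conclusion $q \equiv 0$ does not self-contradict formally. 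The resolution should be that $|A|^2 \equiv \tfrac{1}{2}$ together with the self-expander equation $H = -\tfrac12 \meta{x}{N}$ forces a contradiction with completeness or with the finite-type/nonexistence of such a rigid hypersurface. I would therefore examine whether $|A|^2 \equiv \tfrac12$ combined with the expander structure equations (e.g. via the Simons-type identity for $L|A|^2$ or by classifying hypersurfaces of constant $|A|^2$ satisfying the soliton equation) rules out any complete example, completing the argument. This geometric rigidity step is where the real work lies; the functional-analytic machinery is otherwise a verbatim transcription of Theorem \ref{Th:FiniteMCF}.
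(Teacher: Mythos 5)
Your reconstruction is precisely the argument the paper intends: Proposition \ref{Prop:SelfExpander} is stated without any proof immediately after Theorem \ref{Th:FiniteMCF}, and the intended proof is the verbatim transcription you describe. Your sign bookkeeping is correct: for $\phi_{+1}=\tfrac{|x|^2}{4}$ one has $\camb^2\phi_{+1}=\tfrac12 g_0$, hence $\ricd(N,N)=-\tfrac12$, so the stability operator of Lemma \ref{Lem:SecondVariation} is $L=\Delta+\metag{\nabla\phi_{+1}}{\nabla\cdot}+q$ with $q=|A|^2-\tfrac12$, and the hypothesis $|A|^2\ge\tfrac12$ gives $q\ge 0$. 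Lemma \ref{Lem:Characterization} produces the positive subsolution, the finite-type hypothesis lets Theorem \ref{Th:NaberYau}(1) run, and one concludes that $u$ is a positive constant and $q\equiv 0$.

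The difficulty you isolate at the end is genuine, and it is not resolved anywhere in the paper. In the shrinker case $q=|A|^2+\tfrac12\ge\tfrac12>0$, so $q\equiv 0$ is an outright contradiction; in the expander case $q\equiv 0$ only says $|A|^2\equiv\tfrac12$, which is perfectly consistent with the hypothesis $|A|^2\ge\tfrac12$ as written. The transcription therefore proves nonexistence only under the marginally stronger assumption that $|A|^2>\tfrac12$ at a single point; excluding the borderline case $|A|^2\equiv\tfrac12$ on a complete finite-type self-expander requires a separate rigidity argument, and neither the machinery of Section \ref{Sect:FiniteType} nor anything else in the paper supplies one (the natural candidate, a Simons-type identity for expanders, is compatible with $|A|^2\equiv\tfrac12$ at the level of a first pass and does not give an immediate contradiction). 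Since the paper offers no proof of the proposition, your attempt is as complete as the paper's own treatment, and the gap you identify belongs to the statement rather than to your reading of it. If you want a statement your argument fully justifies, prove it under the hypothesis that $|A|^2\ge\tfrac12$ with strict inequality somewhere; that is exactly what Theorem \ref{Th:NaberYau}(1) delivers.
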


In fact, there is another notion of stability more related to graphs. Let $\Sigma \subset \r^{n+1}$ be a self-shrinker (expander). Let $a \in \r ^{n+1}$ be a constant vector and consider $u_a := \meta{N}{a}$, then $u_a$ satisfies (see \cite{TColWMin12a,TColWMin12b}):

\begin{equation}\label{Eq:L0}
L_0 u_a := \Delta u_a  + \metag{\nabla \phi _c}{ \nabla u_a} + |A|^2 u_a =0 .
\end{equation} 

If $\Sigma$ is a complete multi-graph, say with respect to the $e_{n+1}$ direction, then $\nu := \meta{N}{e_{n+1}}$ is positive and satisfies \eqref{Eq:L0}, therefore Lemma \ref{Lem:Characterization} yields that $L_0$ is stable in the sense of Definition \ref{Def:LStableGeneral}. This motivates the following:

\begin{definition}\label{Def:L0Stable}
We say that a self-similiar solution to the mean curvature flow $\Sigma$ is {\bf $L_0-$stable} if the gradient Schr\"{o}dinger operator
$$ L_0 := \Delta  +\metag{\nabla \phi _{c}}{ \nabla \cdot } + |A|^2 ,$$is stable in the sense of Definition \ref{Def:LStableGeneral}.
\end{definition}

Therefore, the above discussion leads us to:

\begin{theorem}\label{Th:GraphFlow}
The only complete weighted parabolic self-similar solutions (shrinker or expander) to the mean curvature flow that are $L_0-$stable are hyperplanes.

In particular, The only self-similar solutions (shrinker or expander) to the mean curvature flow that are complete multi-graphs and weighted parabolic are hyperplanes. Moreover, the only self-shrinkers that are entire graphs are the hyperplanes.
\end{theorem}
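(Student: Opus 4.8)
The plan is to reduce the statement to Theorem \ref{Th:NaberYau}(1) via the characterization of stability in Lemma \ref{Lem:Characterization}. First I would observe that the gradient Schr\"{o}dinger operator
$$ L_0 := \Delta + \metag{\nabla \phi _c}{\nabla \cdot} + |A|^2 $$
attached to a self-similar solution has nonnegative potential $q := |A|^2 \geq 0$, and that by Definition \ref{Def:SelfFinite} the finite type hypothesis on $\Sigma$ is precisely the finite type hypothesis on $(\Sigma , g, \phi _c)$, so $L_0$ is of finite type. Assuming $L_0$ is $L_0-$stable, Lemma \ref{Lem:Characterization} produces a positive function $u>0$ on $\Sigma$ with $L_0 u = -\lambda _1 (\Sigma , \phi _c) u$; since $\lambda _1 (\Sigma , \phi _c)\geq 0$ and $u>0$, this $u$ is a positive subsolution, $L_0 u \leq 0$.

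Next I would invoke Theorem \ref{Th:NaberYau}(1) with potential $q = |A|^2$: all hypotheses are met (namely $q\geq 0$, $L_0$ of finite type, and the nonnegative nonidentically zero subsolution $u$), so its conclusion forces $u$ to be constant and either $u\equiv 0$ or $q\equiv 0$. As $u>0$ rules out the first alternative, we obtain $|A|^2 \equiv 0$, i.e.\ $\Sigma$ is totally geodesic. A complete totally geodesic hypersurface in $\r ^{n+1}$ is an affine hyperplane; feeding $H\equiv 0$ back into the self-similar equation $H = -\frac{c}{2}\meta{x}{N}$ gives $\meta{x}{N}\equiv 0$, so the hyperplane passes through the origin. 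This establishes the first assertion.

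For the ``in particular'' clauses I would use the general statement just proved. If $\Sigma$ is a complete multi-graph over the $e_{n+1}-$direction, then $\nu := \meta{N}{e_{n+1}}$ has a sign and is positive, and by \eqref{Eq:L0} it satisfies $L_0 \nu = 0$; hence Lemma \ref{Lem:Characterization} shows $L_0$ is stable, and the first part applies. For entire self-shrinker graphs I would note that such graphs are in particular multi-graphs and are properly embedded; by Cheng--Zhou \cite{XCheDZho12} a proper self-shrinker has polynomial volume growth, which implies finite type, so the multi-graph conclusion applies and $\Sigma$ is a hyperplane.

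The reductions above are routine; the real content is entirely carried by Theorem \ref{Th:NaberYau}(1). Consequently the main obstacle is not in the abstract machinery but in verifying that the finite-type hypothesis genuinely holds in the cases of interest: for the entire-graph statement one must check that properness together with the polynomial volume growth estimate of \cite{XCheDZho12} yields the bounded weighted-energy exhaustion required by Definition \ref{Def:FiniteType}, and this is the one place where an input beyond the stability formalism is needed.
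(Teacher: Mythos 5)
Your proposal is correct and follows essentially the same route the paper intends: the paper gives no separate written proof of Theorem \ref{Th:GraphFlow}, presenting it as a consequence of the preceding discussion, and the intended argument is exactly your chain (finite type $+$ stability $\Rightarrow$ positive subsolution via Lemma \ref{Lem:Characterization} $\Rightarrow$ Theorem \ref{Th:NaberYau}(1) with $q=|A|^2\geq 0$ forces $|A|^2\equiv 0$), mirroring the explicit proof of Theorem \ref{Th:FiniteMCF}. Your handling of the multi-graph and entire-graph cases also matches the paper's use of \eqref{Eq:L0} and the Cheng--Zhou polynomial volume growth result.
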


\begin{remark}
It would be interesting to know is a self-shrinker that is a complete multi-graph must be an entire graph.
\end{remark}

\subsubsection{Translating Soliton}

A translating soliton (cf. \cite{TIll}) for the Mean Curvature are nothing but complete surfaces in $\r ^{n+1}$ whose evolution  under the flow is invariant by a translations. Analytically, they can be described as hypersurfaces $\Sigma \subset \r^{n+1}$ satisfying 
$$ H + \meta{N}{v} = 0 ,$$where $v \in \s ^{n}$ is a fixed direction. Hence, we can see that a translating soliton can be see as a weighted minimal hypersurface 
$$ \Sigma \subset (\r ^{n+1} ,\meta{}{}, \phi _T) \text{ where  }  \phi _T (x) = \meta{x}{v}.$$

Analogously as we did above, we say that $\Sigma  \subset (\r ^{n+1} ,\meta{}{}, \phi _T) $ is {\bf weighted parabolic} if $(\Sigma , g , \phi _T)$ is $\phi _T-$parabolic. Here $g$ is the induced metric.  

For simplicity, we assume that $v = e_{n+1}$. It is clear from the definition of translating soliton that any vertical hyperplane $P := \set{ x\in \r ^{n+1} \, :\,\, \meta{x}{e_{n+1} } = 0}$ is a translating soliton. 

\begin{proposition}\label{Prop:Plane}
A vertical hyperplane is not weighted parabolic. 
\end{proposition}
\begin{proof}
Up to change of coordinates, we can assume $P = \set{x \in \r ^{n+1} \, : \,\, x_1 = 0}$. Consider $u(x_2, \ldots , x_{n+1}) = e^{-x_{n+1}} $ which is positive on $P$. Since $\phi _ T (x) = \meta{x}{e_{n+1}} $ we obtain $\camb \phi _ T = e_{n+1}$. Hence
$$ \nabla u + \meta{e_{n+1}}{\nabla u} = 0 ,$$that is, $u$ is a $\phi _T -$harmonic positive function on $P$. Thus, $P$ is not weighted parabolic.
\end{proof}

Note that $\ricd \equiv 0$ since $\camb \phi _T = v$ and the Ricci curvature of the Euclidean space vanishes identically. Hence, we obtain: 

\begin{theorem}\label{Th:Translating}
There are no complete weighted parabolic stable translating soliton.
\end{theorem}
\begin{proof}
Suppose there exists $\Sigma $ a complete weighted parabolic stable translating soliton. Then, $\Sigma \subset (\r ^{n+1},\meta{}{}, \phi _T) $ is a $\phi _T -$parabolic complete stable $\phi _T -$minimal hyeprsurface. As we saw above, $\ricd \equiv 0$, and hence $|A|^2 \equiv 0$ by Theorem \ref{Th:Hypersurface}. Therefore, $\Sigma$ is a hyperplane, which contradicts Proposition \ref{Prop:Plane}.
\end{proof}

\subsection{Gradient Ricci solitons}\label{Sect:RicciFlow}

In G. Perelman's work \cite{GPer} on the Poincar\'{e} conjecture, he was able to formulate the Ricci flow as a gradient flow (we follow Topping\rq{}s Book \cite{PTop06}). Let us consider a (closed) manifold with density $(\amb , g , \phi)$ and introduce the following {\it Fischer information} functional
$$ \mathcal F (g,\phi ) := \int _\amb \left( R + \abs{\camb  \phi }^2 \right) \,dm ,$$
where $dm = e^\phi \, dv$ and $R$ is the scalar curvature of $(\amb, g)$. Equivalently, integrating by parts,
$$ \mathcal F (g ,\phi) := \int _\amb \scad \, dm , $$here,  $\scad$ is the {\bf Perelman Scalar Curvature}, in short P-scalar curvature, given by 
\begin{equation}\label{Eq:Scad}
\scad = R - 2 \Delta _g  \phi - \abs{\camb \phi}^2 ,
\end{equation}where $\Delta _g$ and $\camb$ are the Laplacian and Gradient operators with respect to the ambient metric $g$ respectively. 

\begin{remark}
We focus on the compact case, i.e., $\amb$ is closed. Nevertheless, the discussion can be extended to complete manifolds under appropiatte conditions. Moreover, we will consider here the smooth case.
\end{remark}

Let us consider the variation of $\mathcal F$ which preserves the measure $dm :=e^\phi \,dv$. So, the evolution of $\mathcal F$ is given by
$$ \derv{}{t}{} \mathcal F (g,\phi) = -\int _\amb \metag{{\rm Ric} - \camb ^2  \phi}{\parc{g}{t}} \, e^\phi \, dv = -\int _\amb \metag{\ricd }{\parc{g}{t}} e^\phi \, dv .$$

Therefore, if we have a solution to the coupled system
\begin{equation}\label{Eq:Coupled}
\left\{\begin{matrix}
\parc{g}{t} &= & -2 \ricd ,\\[3mm]
\parc{\phi}{t} &=& \left(  R - \Delta _g  \phi \right) \phi  ,
\end{matrix}\right.
\end{equation}then $\mathcal F$ evolves as
$$ \derv{}{t} \mathcal F (g,\phi) = 2 \int _\amb |\ricd |^2 e^\phi \, dv \geq 0 .$$

Summarizing, if we define $dm := e^\phi \, dv$, which is constant in time, we could view $g$ as a gradient flow for the functional $g \to \mathcal F (g,\phi)$, where $\phi$ is determined by $dm$ and $dv$.

Perelman\rq{}s trick was to show that a solution of the coupled system \eqref{Eq:Coupled} is somehow equivalent to a solution to the decoupled system
\begin{equation}\label{Eq:Decoupled}
\left\{\begin{matrix}
\parc{g}{t} &= & -2 {\rm Ric} ,\\[3mm]
\parc{\phi}{t} &=& \left(\scad + \Delta _g  \phi\right) \phi  ,
\end{matrix}\right.
\end{equation}

One way to see this is that solutions to the coupled system \eqref{Eq:Coupled} may be generated by pulling back solutions of the decoupled system \eqref{Eq:Decoupled} by an appropiatted time-dependent diffeomorphism. We can also reverse the argument.

The aim of Perelman for introducing $\mathcal F$ (and the more general $\mathcal W -$entropy) was the classification of (gradient) Ricci solitons, that is, self-similar solutions to the Ricci flow. They can be analytically described as manifolds with density $(\amb , g , \phi)$ such that $\ricd = \lambda g$, with $\lambda \in \r$. They are called {\bf shrinking}, {\bf steady} or {\bf expanding} depending if $\lambda > 0$, $\lambda =0$ or $\lambda <0$ respectively. Steady Ricci solitons appear as critical points of the $\mathcal F$ functional (shrinking Ricci solitons appear as critical points of the $\mathcal W-$entropy functional). See \cite{PTop06} for a detailed exposition.

\begin{remark}
Moreover, Perelman\rq{}s entropy functional and manifolds with density have applications to String Theory (see \cite{HCao09}).
\end{remark}

The condition $\ricd = \lambda g$ says
$$ {\rm Ric} - \camb ^2 \phi = \lambda g ,$$therefore, gradient Ricci solitons can be thought as a natural generalization of Einstein metrics. One important consequence of F. Morgan result \cite{FMor05} is that gradient shrinking solitons are $\phi -$parabolic, which is the key condition to apply Theorem \ref{Th:NaberYau}.

\begin{lemma}\label{Lem:Morgan}
Let $(\Sigma , g , \phi)$ be a complete manifold with density so that $\ricd \geq \lambda$, $\lambda >0$. Then 
$$ \int _\amb e^\phi \, dv < +\infty .$$

In particular, $(\Sigma , g , \phi) $ is $\phi -$parabolic. 
\end{lemma}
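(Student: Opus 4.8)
The plan is to prove finiteness of the weighted volume by a weighted Bishop--Gromov comparison and then deduce finite type by a routine cut-off construction. Write $\Delta_\phi u := \Delta u + \metag{\nabla\phi}{\nabla u}$ for the drift Laplacian; it is self-adjoint with respect to $e^\phi\,dv$ and obeys the weighted Bochner formula
$$ \tfrac12\,\Delta_\phi\abs{\nabla u}^2 = \abs{\nabla^2 u}^2 + \metag{\nabla u}{\nabla(\Delta_\phi u)} + \ricd(\nabla u,\nabla u), $$
where $\nabla^2 u$ is the Hessian of $u$. I would fix $p\in\Sigma$ and, using completeness, write the weighted volume in geodesic polar coordinates on the interior of the cut locus as $e^\phi\,dv = \mathcal A_\phi(r,\theta)\,dr\,d\theta$, with $\mathcal A_\phi = e^\phi\,\mathcal A$ and $\mathcal A$ the usual Jacobian of the exponential map. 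The identity I rely on is that the logarithmic radial derivative of $\mathcal A_\phi$ is the weighted mean curvature of the geodesic spheres, $\parc{}{r}\log\mathcal A_\phi = \Delta_\phi r =: m_\phi$.

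Applying the Bochner formula to $u=r$ (so $\abs{\nabla r}=1$) and using $\metag{\nabla r}{\nabla(\Delta_\phi r)} = \parc{m_\phi}{r}$ together with $\abs{\nabla^2 r}^2\ge 0$, the formula collapses to the Riccati inequality
$$ \parc{m_\phi}{r} = -\abs{\nabla^2 r}^2 - \ricd(\nabla r,\nabla r) \leq -\ricd(\nabla r,\nabla r) \leq -\lambda. $$
It is worth emphasizing that, unlike the classical Myers argument, the algebraic bound on $\abs{\nabla^2 r}^2$ is not needed here: the $\infty$-dimensional lower bound $\ricd\ge\lambda$ alone forces $m_\phi$ to decay linearly. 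This is also why the conclusion is finiteness of the weighted volume and \emph{not} compactness (the Gaussian density on $\r^n$ has $\ricd\equiv\tfrac12 g$ yet $\r^n$ is noncompact). Integrating $\parc{m_\phi}{r}\le-\lambda$ twice from a small fixed radius $r_0$, and using compactness of the unit sphere in $T_p\Sigma$ so that $\mathcal A_\phi(r_0,\cdot)$ and $m_\phi(r_0,\cdot)$ are bounded by constants $C_1,C_2$, yields the Gaussian bound $\mathcal A_\phi(r,\theta)\le C_1\exp\!\big(C_2(r-r_0)-\tfrac{\lambda}{2}(r-r_0)^2\big)$.

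Integrating this estimate over $r\in[r_0,\infty)$ and $\theta\in\mathbb S^{n-1}$ (up to the cut locus) gives $\int_{\Sigma\setminus B(r_0)}e^\phi\,dv<\infty$ by Gaussian decay in $r$ and finite measure of the sphere; adding the finite weighted volume of the compact ball $\overline{B(r_0)}$ proves $\int_\Sigma e^\phi\,dv<\infty$. For the finite-type conclusion I would take linear radial cut-offs $\psi_i$ equal to $1$ on $B(i)$, vanishing outside $B(2i)$, with $\abs{\nabla\psi_i}\le 1/i$; then
$$ \int_\Sigma\abs{\nabla\psi_i}^2 e^\phi\,dv \;\leq\; \frac{1}{i^2}\int_\Sigma e^\phi\,dv \;\longrightarrow\; 0, $$
so the weighted energies are bounded, and $\Omega_i=\psi_i^{-1}(1)\supseteq B(i)$ is an increasing exhaustion, whence $(\Sigma,g,\phi)$ is of finite type per Definition \ref{Def:FiniteType}.

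The main obstacle is not the Riccati estimate but the non-smoothness of $r$ across the cut locus: the comparison $\parc{m_\phi}{r}\le-\lambda$ is only valid along each minimizing geodesic up to its cut point. I would resolve this in the standard way by integrating $\mathcal A_\phi$ only over the interior of the cut locus (where $r$ is smooth) and invoking that the cut locus has measure zero, so no weighted volume is lost; equivalently, one reads the Laplacian comparison in the barrier sense, across which the inequality only improves, leaving the Gaussian integral bound intact.
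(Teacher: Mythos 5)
Your proof is correct, and it is essentially the standard argument behind the result the paper invokes: the paper itself gives no proof of Lemma \ref{Lem:Morgan}, simply attributing it to Morgan \cite{FMor05}, whose proof is exactly this weighted Bochner/Riccati comparison along radial geodesics yielding Gaussian decay of the weighted volume element $\mathcal A_\phi$. Your handling of the cut locus and the passage from finite weighted volume to finite type via linear cut-offs are both sound, so nothing further is needed.
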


Here, we extend a recent result of P. Petersen and W. Wylie \cite{PPetWWyl10} relaxing the hypothesis to a $L^2-$type bound of the scalar curvature $R$. Moreover, they assume that the Weyl tensor vanishes identically, we change that condition by an inequality between the scalar curvature and the Ricci curvature.

\begin{theorem}\label{Th:Shrinker}
Let $(\Sigma , g , \phi , \lambda)$ be a complete shrinking soliton of dimension $n\geq 3$ so that $\lambda R \leq \abs{{\rm Ric} }^2$. Assume that there exists a sequence of cut-off functions $\set{\psi _i}_i \subset C^\infty _0 (\Sigma)$ such that $0 \leq \psi _i \leq 1 $ in $\Sigma$, the compact sets $\Omega _i := \psi _i ^{-1}(1)$ form an increasing exhaustion of $\Sigma$, and
the sequence of weighted energies $\set{ \int _\Sigma |\nabla \psi _i|^2 R ^2 e^\phi \, dv_\Sigma }_i$ is bounded.

Then, $(\Sigma ,g)$ has constant scalar curvature and $\lambda R = \abs{{\rm Ric}}^2 $.
\end{theorem}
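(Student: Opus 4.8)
The plan is to read the statement off from the Naber--Yau Liouville theorem (Theorem~\ref{Th:NaberYau}) applied to the scalar curvature $R$ itself. The starting point is the standard soliton calculus. Writing the soliton equation ${\rm Ric}-\camb^2\phi=\lambda g$ in the form ${\rm Ric}+\camb^2 f=\lambda g$ with $f=-\phi$, a contraction of the second Bianchi identity together with the soliton equation gives the well-known drift identity
\begin{equation*}
\Delta R + g(\nabla\phi,\nabla R) = 2\lambda R - 2\abs{{\rm Ric}}^2 ;
\end{equation*}
that is, $L_{\phi,0}R = 2\lambda R - 2\abs{{\rm Ric}}^2$, where $L_{\phi,0}=\Delta+g(\nabla\phi,\nabla\cdot)$ is the drift operator, self-adjoint for the weight $e^\phi$ by Proposition~\ref{Prop:SelfAdjoint}. (The first-order term carries the sign $+g(\nabla\phi,\nabla\cdot)$ precisely because the soliton potential in $\ricd$ is $-\camb^2\phi$, so $L_{\phi,0}$ is the weighted Laplacian for $e^\phi\,dv$.) Feeding the hypothesis $\lambda R\le\abs{{\rm Ric}}^2$ into this identity shows $L_{\phi,0}R\le 0$, so $R$ is a subsolution of the drift operator with vanishing potential $q\equiv 0$. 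I would also record that a complete shrinking gradient Ricci soliton has $R\ge 0$, and indeed $R>0$ unless it is the flat Gaussian soliton (strong maximum principle applied to the same identity); hence $R$ is a nonnegative subsolution.

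With these in hand the scheme is to invoke Theorem~\ref{Th:NaberYau}(1): $R$ is a nonnegative subsolution of $L_{\phi,0}$ with $q\equiv 0$, and the global input it requires is supplied by the hypothesis, namely that the weighted energies $\set{\int_\Sigma\abs{\nabla\psi_i}^2 R^2 e^\phi\,dv_\Sigma}_i$ are bounded. These play for $R$ exactly the role the finite-type energies play for a bounded function $v$ in Theorem~\ref{Th:FiniteType}. (By Lemma~\ref{Lem:Morgan} a shrinker is in any case of finite type, so the machinery is available.) The theorem then forces $R$ to be constant, whereupon substituting $\nabla R\equiv 0$ back into the drift identity gives $2\lambda R-2\abs{{\rm Ric}}^2=0$, i.e. $\lambda R=\abs{{\rm Ric}}^2$; the case $R\equiv 0$ is the Gaussian soliton, for which both conclusions hold trivially.

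The step I expect to be the main obstacle is the cutoff estimate hidden inside that application, because $R$ is a \emph{subsolution} rather than a supersolution: one cannot simply discard the zeroth-order term, and in the naive integration by parts against $\psi_i^2 R$ the curvature gap $\abs{{\rm Ric}}^2-\lambda R\ge 0$ enters with an unfavorable sign. The way I would route around this is to integrate the drift identity against $\psi_i^2$ first, obtaining $\int_\Sigma\psi_i^2(\abs{{\rm Ric}}^2-\lambda R)e^\phi\,dv_\Sigma=\int_\Sigma\psi_i\,g(\nabla\psi_i,\nabla R)\,e^\phi\,dv_\Sigma$, and then to show that the right-hand side tends to $0$ as $i\to\infty$ via a Cauchy--Schwarz estimate in which the assumed $R^2$-weighted energy bound controls the boundary contribution supported on $\mathrm{supp}(\psi_i)\setminus\Omega_i$. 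Since the integrand $\abs{{\rm Ric}}^2-\lambda R$ is nonnegative, passing to the limit yields $\int_\Sigma(\abs{{\rm Ric}}^2-\lambda R)e^\phi\,dv_\Sigma=0$, hence the pointwise identity $\lambda R=\abs{{\rm Ric}}^2$; this returns $L_{\phi,0}R=0$, so $R$ is weighted-harmonic, and the Liouville property again gives $R$ constant. Making this tail estimate rigorous — in particular justifying the finiteness of $\int_\Sigma\abs{\nabla R}^2 e^\phi\,dv_\Sigma$ needed for the boundary term to vanish — is the only genuinely delicate point; the remaining ingredients are the soliton drift identity and the nonnegativity $R\ge 0$.
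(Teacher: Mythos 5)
Your proposal is correct and follows essentially the same route as the paper: the Petersen--Wylie drift identity $\Delta R + g(\nabla\phi,\nabla R) = 2\lambda R - 2\abs{{\rm Ric}}^2$, the nonnegativity of $R$ on a complete shrinker, the observation that the hypothesis makes $R$ a nonnegative subsolution of $L_{\phi,0}$, and an application of Theorem~\ref{Th:NaberYau} to conclude $R$ is constant and hence $\lambda R=\abs{{\rm Ric}}^2$. The cutoff/Cauchy--Schwarz issue you flag is exactly what the proof of Theorem~\ref{Th:NaberYau}(1) already absorbs via the $e^{\phi-u}$ weight (using $u\ge 0$), with finite type supplied by Lemma~\ref{Lem:Morgan}, so no separate bound on $\int_\Sigma\abs{\nabla R}^2 e^\phi\,dv_\Sigma$ is needed.
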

\begin{proof}
The proof follows from the following equation developed in \cite{PPetWWyl10}:
\begin{equation}\label{Eq:ScalarLambda}
\Delta R + g(\nabla \phi , \nabla R)- 2\lambda R = - 2\left|{\rm Ric} \right|^2 .
\end{equation}

First, since the scalar curvature satisfies the boundedness condition, $R>0$ or $(\Sigma , g)$ is flat (see \cite{PPetWWyl12}). So, we can continue assuming $R >0$. Second, since $\lambda R -\abs{{\rm Ric}}^2 \leq 0$, \eqref{Eq:ScalarLambda} implies that 
$$  \Delta R + g(\nabla \phi , \nabla R) = 2 \lambda R - 2 \abs{{\rm Ric}}^2 \leq 0 .$$ 

Thus, Theorem \ref{Th:NaberYau} implies that $R$ is constant, and so, $\lambda R = \abs{{\rm Ric}}^2 $.
\end{proof}

We also have the following classification result:

\begin{theorem}\label{Th:GradientSoliton}
\begin{itemize}
\item A complete $\phi-$parabolic gradient expander soliton and nonnegative scalar curvature is Ricci flat. 

\item A complete $\phi-$parabolic gradient steady soliton whose scalar curvature does not change sign is Ricci flat. Moreover, if $\phi$ is not constant then it is a product of a Ricci flat manifold with $\r $. Also, $\Sigma$ is diffeomorphic to $\r ^n$.

\item A complete $\phi -$parabolic gradient shrinking soliton of nonpositive scalar curvature is Ricci flat. Moreover, $(\Sigma , g)$ is isometric to $\r ^n$ endowed with the standard Euclidean metric.
\end{itemize}
\end{theorem}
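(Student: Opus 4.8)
The plan is to run all three cases through a single mechanism: rewrite the Perelman--Petersen--Wylie evolution identity for the scalar curvature as a differential inequality for a suitable gradient Schr\"odinger operator of finite type, force $R$ to be constant by Theorem \ref{Th:NaberYau}, feed this back into the identity to obtain $\abs{{\rm Ric}}^2 = \lambda R$ with a definite sign, and finally upgrade Ricci-flatness to the stated splitting/rigidity via the soliton equation. The common starting point is \eqref{Eq:ScalarLambda},
$$ \Delta R + g(\nabla \phi , \nabla R) - 2\lambda R = -2\abs{{\rm Ric}}^2 ,$$
which I read as $L_{\phi,q} R = -2\abs{{\rm Ric}}^2 \le 0$ with the potential $q := -2\lambda$ (so $q$ has sign opposite to $\lambda$). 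Finite type is available in every case: it is assumed for the expander and steady solitons, and for shrinking solitons it is automatic from Lemma \ref{Lem:Morgan}, since $\ricd = \lambda g \geq \lambda > 0$.

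For the expander ($\lambda < 0$) I take $q = -2\lambda > 0$. Here $R \geq 0$, so $R$ is a nonnegative subsolution of the finite-type operator $L_{\phi,q}$. If $R \not\equiv 0$, item (1) of Theorem \ref{Th:NaberYau} would force $R$ constant with $q \equiv 0$, contradicting $q = -2\lambda \neq 0$; hence $R \equiv 0$, and \eqref{Eq:ScalarLambda} gives $\abs{{\rm Ric}}^2 = 0$. For the steady soliton ($\lambda = 0$) the identity reads $\Delta R + g(\nabla\phi, \nabla R) = -2\abs{{\rm Ric}}^2 \leq 0$, i.e. $q \equiv 0$. When $R \geq 0$, item (1) gives $R$ constant directly; when $R \leq 0$, the function $-R \geq 0$ is a nonnegative supersolution bounded below, and the $q\equiv 0$ clause of item (3) again forces it constant. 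Either way $R$ is constant, so $\Delta R = g(\nabla\phi, \nabla R) = 0$ and \eqref{Eq:ScalarLambda} yields $\abs{{\rm Ric}}^2 = 0$.

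The shrinking case ($\lambda > 0$, $R \leq 0$) is handled with $q \equiv 0$ as well. Rearranging \eqref{Eq:ScalarLambda}, the nonnegative function $u := -R$ satisfies $\Delta u + g(\nabla\phi, \nabla u) = -2\lambda R + 2\abs{{\rm Ric}}^2 \geq 0$, so $u$ is a nonnegative, bounded-below supersolution for $L_{\phi,0}$; since the soliton is of finite type, the $q\equiv 0$ clause of item (3) of Theorem \ref{Th:NaberYau} makes $u$, hence $R$, constant. Substituting back gives $\abs{{\rm Ric}}^2 = \lambda R \leq 0$, and nonnegativity of $\abs{{\rm Ric}}^2$ forces $R \equiv 0$ and ${\rm Ric} \equiv 0$.

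It remains to extract the structural statements from Ricci-flatness through the soliton equation ${\rm Ric} - \camb^2\phi = \lambda g$. For the steady case this becomes $\camb^2\phi \equiv 0$: if $\phi$ is nonconstant then $\nabla\phi$ is a nontrivial parallel vector field (so $\abs{\nabla\phi}$ is a nonzero constant), and the de Rham decomposition theorem splits $\Sigma$ isometrically as a product $N^{n-1}\times\r$, with $N$ Ricci-flat and $\phi$ affine along the $\r$ factor; identifying the diffeomorphism type then reduces to understanding the cross-section $N$. For the shrinking case, Ricci-flatness turns the soliton equation into $\camb^2\phi = -\lambda g$ with $\lambda > 0$, so $\phi$ is a nonconstant function whose Hessian is a constant multiple of the metric, and Tashiro's rigidity theorem identifies $(\Sigma, g)$ isometrically with Euclidean $\r^n$ (the Gaussian shrinker). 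I expect the main obstacle to be the sign bookkeeping that lands each case on the hypotheses of the correct variant of Theorem \ref{Th:NaberYau} --- in particular the use of the $q \equiv 0$ refinement of item (3), which needs only a one-sided bound --- together with the rigidity input (de Rham, respectively Tashiro) required to promote ${\rm Ric} \equiv 0$ to the full geometric description.
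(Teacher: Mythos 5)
Your proposal is correct and follows essentially the same route as the paper: in each case the identity \eqref{Eq:ScalarLambda} is fed into the appropriate clause of Theorem \ref{Th:NaberYau} (with finite type for the shrinking case supplied by Lemma \ref{Lem:Morgan}) to force $R$ constant, whence $R\equiv 0$ and ${\rm Ric}\equiv 0$, and the structural conclusions come from rigidity of the soliton equation. The only cosmetic difference is that you derive the splitting and the Euclidean identification directly from de Rham and Tashiro, where the paper cites \cite[Proposition 4]{PPetWWyl12}, \cite[Proposition 5.7]{HCao09} and \cite[Lemma 7.1]{ANab10}; your case bookkeeping for which item of Theorem \ref{Th:NaberYau} applies is in fact more explicit than the paper's.
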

\begin{proof}
The proof follows from \eqref{Eq:ScalarLambda}.

\begin{itemize}
\item Applying Theorem \ref{Th:NaberYau} to $u=R$ we obtain that $R$ vanishes identically on $\Sigma$, which implies that $(\Sigma , g)$ is Ricci flat from \eqref{Eq:ScalarLambda}. 

\item As above, $R$ satisfies \eqref{Eq:ScalarLambda}. So, taking either $u = -R$ if $R \leq 0$ or $u= R$ if $R\geq 0$, Theorem \ref{Th:NaberYau} implies that $R \equiv 0$. Therefore, $(\Sigma ,g)$ is Ricci flat. The last sentence follows from \cite[Proposition 4]{PPetWWyl12}. Moreover, $\Sigma$ is diffeomorphic to $\r ^n$ by \cite[Proposition 5.7]{HCao09}.

\item Taking  $u = -R$, Theorem \ref{Th:NaberYau} implies that $R \equiv 0$. Therefore, $(\Sigma ,g)$ is Ricci flat. Thus, \cite[Lemma 7.1]{ANab10} implies that $(\Sigma , g)$ is isometric to $\r ^n$ endowed with the standard Euclidean metric.
\end{itemize}
\end{proof}

\subsection{Optimal Transportation}

Recently, geometric problems on manifolds with density $(\amb , g , \phi) $ has been related to corresponding problems in the Wasserstein space of probability measures equipped with the quadratic Wasserstein metric, which corresponds to a relaxed version of Monge\rq{}s optimal transportation problem  (we follow Part II in Villani\rq{}s Book \cite{CVil08}). 

We will explain the relationship between them from a formal point of view, we will not be rigorous (see \cite{CVil08} and references therein). Moreover, we will assume all the objects involved are smooth and verify the appropriate convergence conditions to avoid technical problems and clarify the exposition.

Let $(\amb ,g)$ be a Riemannian manifold. Denote by $dv$ and $d$ the Riemannian measure and distance with respect to $g$ respectively. The Wasserstein space $\mathcal P _2 (\amb)$ is the space of probability measures which have a finite moment of order $2$, i.e., 
$$ \mathcal P _2 (\amb ) = \set{ \mu \in \mathcal P (\amb ) \, : \, \, \int _\amb d(p_0 ,p)^2 \, d\mu < +\infty }, $$where $p_0 \in \amb$ is arbitrary and $\mathcal P (\amb)$ is the space of probability measures. The Wasserstein space will be equipped with the Wasserstein distance
$$ \mathcal W _2 (\mu , \nu) = \left( {\rm inf}\set{  \int_\amb d(p,q)^2 \, d\pi \, : \, \, \pi \in \Pi (\mu , \nu) }\right)^{\frac{1}{2}} ,$$where the infimum is taken over all joint probability measures $\pi $ on $\amb \times \amb $ with marginals $\mu $ and $\nu$. Such joint measures are called {\bf transference plans}; those achieving the infimum are called {\bf optimal transference plans}.

\begin{remark}
We should think on $d^2$ as the cost function on the optimal transportation problem. 
\end{remark} 

F. Otto \cite{FOtt01} noticed that computations of Riemannian nature can shed light on Optimal Transportation Theory. The problem was to establish rules for formally perform differential calculus on $\mathcal P _2 (\amb)$. We focus here on a certain class of functional that appear naturally on the theory, as for example, the Boltzman entropy.

Let us consider $\phi : \amb \to \r $ a smooth function to distort the reference volume measure, i.e., $dm = e^\phi \, dv$, that is, we can consider a manifold with density $(\amb , g, \phi)$. Consider a smooth function $U : \r ^+ \to \r $ which will relate the values of the density of the probability measure and the value of the functional, i.e., 
$$ \mathcal U _m (\mu ) = \int _\amb U(\rho (p)) \, dm \, , \qquad \mu = \rho \, dm . $$

\begin{remark}
We should think on $\mathcal U _m$ as the internal energy of a fluid, i.e., it is like the energy contained on a fluid of density $\rho$. The function $U$ should be thought as a property of the fluid and it might mean some microscopic interaction. It is natural to assume $U(0)=0$.
\end{remark}

In analogy with thermodynamics, we can introduce the {\bf pressure} as:
$$ {\bf p} (\rho ) := \rho \, U\rq{} (\rho) -U(\rho) .$$

\begin{remark}
The physical meaning of the pressure says that if a fluid is enclosed in a domain $\Omega$, then the pressure felt by the boundary $\partial \Omega$ at a point is normal and proportional to $\mathcal {\bf p}$ at that point.
\end{remark}

So, one can consider the total pressure 
$$\int _\amb {\bf p}(\rho) \, dm $$and again, compute the variation of this functional with respect to small variations of the measure, which leads to the {\it iterated pressure}:
$$ {\bf p}_2 (\rho) := \rho \, {\bf p}\rq{}(\rho) - {\bf p}(\rho) .$$

One can see that the pressure and iterated pressure appear when we differentiate the energy functional $\mathcal U _m$ to first and second order respectively. F. Otto \cite{FOtt01} gave an {\it explicit} expression for the gradient and Hessian of the functional $\mathcal U _m$. For a given measure $\mu$, the gradient of $\mathcal U _m$ at $\mu$ is a {\it tangent vector} at $\mu$ in the Wasserstein space, and it is given by
$$ {\rm grad}_\mu \mathcal U _m = -\left( \Delta {\bf p}(\rho) + g(\nabla \phi ,\nabla {\bf p}(\rho)) \right) \, dm ,$$where, remember, $dm = e^\phi \, dv$. The Hessian of $\mathcal U _m$ at $\mu$ is a quadratic form on the tangent space $T_ \mu \mathcal P_2 (\amb)$, but its expression is rather complicated in general. We continue this discussion only for the Boltzman entropy functional, which is given by 
$$ \mathcal H _m (\mu) := \int _\amb \rho \, \ln \rho  \, dm  , \qquad \mu = \rho \, dm .$$

For the Boltzman entropy, its gradient is given by 
$$ {\rm grad}_\mu \mathcal H _m =  -\left( \Delta \rho + g(\nabla \phi  ,\nabla \rho) \right) \, dm $$so, we can see that critical points of the Boltzman entropy correspond to positive solution to 
$$  \Delta \rho + g(\nabla \phi ,\nabla \rho)  =0 ,$$or, equivalently, they correspond to positive {\it Jacobi} functions for the gradient Schr\"{o}dinger operator
$$ L _\phi \rho :=  \Delta \rho + g(\nabla \phi ,\nabla \rho) .$$

Now, since optimal transportation plans (for the Boltzman entropy functional) must be critical points, we call them {\bf critical transportation plans}, we can locate the space of optimal transportation plans under global conditions on the initial data $(\amb , g , \phi)$ using Theorem A, that is:

\begin{theorem}
Let $(\amb , g , \phi)$ be a $\phi -$parabolic complete manifolds. Then, critical transportation plans $\mu$ for the Boltzman entropy functional are $\mu := \alpha \,dm $, $\alpha \in \r ^+$. Moreover, if we assume 
$$ \int _\amb d\mu = \int _\amb dm ,$$then, the only optimal transportation plan is $\mu := e^\phi \, dv$.
\end{theorem}

Actually, this is the situation for the Gaussian measure space $(\r ^{n+1}, \meta{\cdot }{\cdot } , e^{-\pi |x|^2})$. Note that, if $dm := e^\phi \, dv \in \mathcal P _2 (\amb )$, then $(\amb , g , \phi)$ has finite $\phi-$capacity.

\begin{remark}
It would be interesting to investigate deeper this relationship, even for other functionals and not only for the Boltzman entropy.
\end{remark}

\begin{center}
{\bf Acknowledgements}
\end{center}
\begin{quote}
The author wishes to express his gratitude to F. Morgan (Williams College, USA) and F. Marques (IMPA, Brazil) for their comments and corrections. Also, the author wishes to express his gratitude to T. Bourni (Frei University, Germany) and L. Hauswirth (Universit\'{e} Paris-Est, France) for pointing us out the relation between manifolds with density and Mean Curvature Flow and Optimal Transport respectively. Also, we appreciate the comments of L. Mari (UFC, Brazil) and E. Barbosa (UFMG, Brazil).

The author is partially supported by Spanish MEC-FEDER Grant MTM2013-43970-P; CNPq-Brazil Grants 405732/2013-9 and 14/2012 - Universal, Grant 302669/2011-6 - Produtividade;  FAPERJ Grant 25/2014 - Jovem Cientista de Nosso Estado.
\end{quote}

\end{document}